%% file: preprint.tex
\documentclass{article}

\input{packages}
\usepackage{authblk}

\input{newcommands}

\newcommand{\Paragraph}[1]{\paragraph{#1}}

\begin{document}

\title{\thetitle}

\author[1]{\firstauthor}
\author[2]{\secondauthor}
\author[1]{\thirdauthor}

\affil[1]{\firstaffil}
\affil[2]{\secondaffil}

\maketitle

\begin{abstract}
  \input{abstract}
\end{abstract}

\input{doc}

\end{document}

%% file: packages.tex
\usepackage[backend=biber,maxbibnames=5]{biblatex}

\usepackage{xspace}
\usepackage{siunitx}
\usepackage{csvsimple}
\usepackage{todonotes}
\usepackage{mathtools,dsfont}
\usepackage{amsmath,amssymb,amsthm}

\usepackage[algo2e,norelsize,ruled,vlined,commentsnumbered]{algorithm2e}

\usepackage{tikz}
\usetikzlibrary{matrix, positioning, patterns}
\usepackage{pgfplots}
\pgfplotsset{compat=1.18}
\usepackage{tikzscale}
\usepackage{adjustbox}
\usepackage{subcaption}
\usepackage[group-digits=integer,group-minimum-digits=4,print-unity-mantissa=false]{siunitx}
\usepackage{booktabs}
\usepackage{multicol}
\usepackage{placeins}

\usetikzlibrary{external}

\graphicspath{{Figures/}}

\usepackage[hidelinks]{hyperref}

%% file: newcommands.tex
\newcommand{\st}{\textrm{s.t.\xspace}}
\newcommand{\ie}{i.e.,\xspace}

\newcommand{\define}{\coloneqq}

\newcommand{\Real}{\mathds{R}}
\newcommand{\Nat}{\mathds{N}}

\newcommand{\Lag}{\mathcal{L}}

\newcommand{\IndexSet}{\mathcal{I}}
\newcommand{\idxVC}[1]{#1+l}
\newcommand{\idxCC}[1]{#1}

\newcommand{\primalProd}[2]{\langle #1, #2 \rangle_{X}}
\newcommand{\dualProd}[2]{\langle #1, #2 \rangle_{Y}}

\newcommand{\primalNorm}[1]{\|#1\|_{X}}
\newcommand{\dualNorm}[1]{\|#1\|_{Y}}
\newcommand{\matrixScalarprod}[2]{(#1)^T #2 (#1)}

\theoremstyle{plain}
\newtheorem{theorem}{Theorem}[section]

\theoremstyle{definition}

\DeclareMathOperator{\MPVC}{MPVC}
\DeclareMathOperator{\MPVCS}{MPVCS}

\DeclareMathOperator{\argmin}{arg\,min}

\DeclareMathOperator{\inc}{inc}

\DeclareMathOperator{\init}{init}

\newcommand{\signature}{\sigma}

\renewcommand{\epsilon}{\varepsilon}

\newcommand{\instName}[1]{\texttt{#1}}

\newcommand{\thetitle}{A Flow-based Method for Problems with Vanishing Constraints}

\newcommand{\firstauthor}{Christoph Hansknecht}
\newcommand{\secondauthor}{Julian Niederer}
\newcommand{\thirdauthor}{Andreas Potschka}

\newcommand{\firstaffil}{Institute of Mathematics, Clausthal University of Technology, Clausthal-Zellerfeld, Germany}
\newcommand{\secondaffil}{Interdisciplinary Center for Scientific Computing, University of Heidelberg, Heidelberg, Germany}

\hypersetup{
  pdftitle = {\thetitle},
  pdfauthor = {\firstauthor, \secondauthor, \thirdauthor}
}

\tikzset{
  potential_bar/.style={line cap=round, line join=round, very thin},
  bar/.style={line cap=round, line join=round},
  potential_coord/.style={fill,circle, inner sep=0},
  loading_force/.style={->,>=stealth, very thick, gray},
  solution_bar/.style={line join=round},
}

%% file: abstract.tex
Mathematical Programs with Vanishing Constraints (MPVCs) are a
notoriously challenging class of problems owing to their lack of
constraint qualification. Therefore, to tackle these problems,
relaxation-based approaches are typically used. While often yielding
satisfactory results, they generally require significant manual tuning
and adjustment of the relaxation parameter. To circumvent these
problems, we introduce a novel approach based on piecewise gradient
flows leading to first-order stationary points. We demonstrate
the effectiveness of our method on several real-world MPVC instances
and compare it to a common relaxation approach.

%% file: doc.tex
\section{Introduction}

This paper investigates the usefulness of the \emph{Sequential Homotopy Method}~\cite{sequential_homotopy}
for solving so-called \emph{Mathematical Programs with Vanishing Constraints}~\cite{mpvc}. These programs, called MPVCs for short are
optimization problems of the form
\begin{equation}
  \label{eq:mpvc}
  \tag{MPVC}
  \begin{aligned}
    \min_{x \in C} \quad & f(x) \\
    \st \quad & g(x) = 0, && \\
               & H_{i}(x) \geq 0, && \text{for } i = 1, \ldots , l, \\
               & G_{i}(x)H_{i}(x) \geq 0, && \text{for } i = 1, \ldots , l \\
  \end{aligned}
\end{equation}
for functions $f, G_{i}, H_{i}: \Real^{n} \to \Real$, and $g : \Real^{n} \to \Real^{m}$, which we
assume to be at least twice continuously differentiable, 
$C \define \{x \in \Real^{n} \mid x_\mathrm{l} \leq x \leq x_\mathrm{u} \}$ the convex set derived from the (possibly infinite) variable bounds on $x$.

The difficulty in solving MPVCs arises from to the products $G_{i}(x)H_{i}(x)$.
The constraints
$G_{i}$ are called \emph{vanishing} because, depending on the value of their corresponding \emph{controlling} constraint $H_{i}$, $G_{i}$ is either present in the problem (when $H_{i}(x) > 0$) or vanishes from it (when $H_{i}(x) = 0$).
Consequently the feasible region of an MPVC decomposes into different 
pieces where either $H_{i}(x) > 0$ and $G_{i}(x) \geq 0$ or $H_{i}(x) = 0$
and $G_{i}(x)$ is free. The intersection of these pieces (if it exists)
consists of the points $x$ satisfying $G_{i}(x) \geq 0$ and $H_{i}(x) = 0$. Following the convention in~\cite{mpvc}, we define for a feasible point $x \in \Real^{n}$ the following
index sets
\begin{equation*}
  \begin{aligned}
    \begin{aligned}[t]
        \IndexSet_{00}(x) &\define \{i \mid H_{i}(x) = 0, G_{i}(x) = 0\}, \\
  \IndexSet_{0+}(x) &\define \{i \mid H_{i}(x) = 0, G_{i}(x) > 0 \}, \\
    \IndexSet_{0-}(x) &\define \{i \mid H_{i}(x) = 0, G_{i}(x) < 0 \}, \\
    \IndexSet_{0}(x) &\define \IndexSet_{00}(x) \cup \IndexSet_{0+}(x) \cup \IndexSet_{0-}(x),
    \end{aligned} & \quad
                    \begin{aligned}[t]
                      \IndexSet_{+0}(x) &\define \{i \mid H_{i}(x) > 0, G_{i}(x) = 0\}, \\
                      \IndexSet_{++}(x) &\define \{i \mid H_{i}(x) > 0, G_{i}(x) > 0\}, \\
                      \IndexSet_{+}(x) & \define \IndexSet_{+0}(x) \cup \IndexSet_{++}(x),
                    \end{aligned}
  \end{aligned}
\end{equation*}
where we call the indices in $\IndexSet_{00}(x)$ and respective constraints \emph{bi-active} (see Figure \ref{fig:MPVC_vertical}). 
\begin{figure}[ht]
  \centering
  \includegraphics[width=.3\textwidth]{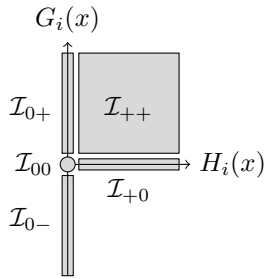}
  \caption{The feasible region of a controlling / vanishing constraint in vertical form.}
  \label{fig:MPVC_vertical}
\end{figure}
The reason for the difficulty of solving programs with vanishing
constraints is the loss of strong constraint qualifications. For
instance, the well-known \emph{Linear Independence Constraint
  Qualification (LICQ)}~\cite{numerical_optimization} as well as the
slightly weaker \emph{Mangasarian-Fromovitz Constraint Qualification}
(MFCQ)~\cite{mfcq} are generally not satisfied at bi-active points of
MPVCs (see~\cite{mpvc} for an in-depth explanation). Consequently, as
any nonlinear solver approaches such points, the determination of
suitable Lagrange multipliers satisfying the KKT conditions becomes hard
or even impossible.

MPVCs are closely related to so-called \emph{Mathematical Programs
  with Equilibrium Constraints}, called MPECs for short~\cite{mpec},
which are identical to MPVCs except for the fact that the products
$G_{i}H_{i}$ must be zero rather than non-negative. Indeed, MPVCs can
be transformed to MPECs by introducing additional slack
variables~\cite[Lemma 1]{mpvc}.  The problem class consisting of MPVCs
and MPECs is therefore also often referred to as
\emph{Mathematical Programs with Complementarity Constraints} (MPCCs).

From a theoretical view, it is necessary to derive a concept of
stationarity in order to obtain practically useful first-order
optimality conditions for MPCCs.  Consequently, several different
stationarity concepts valid under varying problem-specific constraint
qualifications particularly for MPECs have been introduced, such as
B-stationarity~\cite{abadie_type_constraint_qualification}, W- and
S-stationarity~\cite{mpec_weak_stat}.
These stationarity concepts have great effect on the practical
tractability, where weaker stationarity concepts allow for the design
of reliably convergent algorithms (see e.g.~\cite{benko_sqp}).  In
certain applications it is however crucial to obtain points satisfying
\emph{strong stationarity} conditions (which we will define later in
Theorem~\ref{thm:mpvc_kkt} in
Section~\ref{sec:constraints_qualifications}), rendering such
approaches useless. One method converging to points satisfying strong
stationarity conditions is an SQP method presented in
\cite{kirches_sqp}.

Another approach to solving generic MPCCs is to artificially introduce
regularity by adding a regularization term to the vanishing
constraints (see
e.g.~\cite{diss_hoheisel,Hoheisel22,relaxation_method,solving_mpccs} for a summary). For instance, for MPEC
problems a parameter $\sigma > 0$ is chosen, and the equilibrium
constraints $G_{i}H_{i} = 0$ are replaced by
$\Phi(G_{i}, H_{i}, \sigma) = 0$, where
$\Phi : \Real \times \Real \times \Real \to \Real$ tends to
$G_{i}H_{i}$ for $\sigma \to 0$ while satisfying LICQ or MFCQ for
$\sigma > 0$. The optimization is then carried out by solving a
sequence of NLPs parameterized in $\sigma$, which is driven to
zero. At each step, the optimal solution of one NLP is used to
initialize the next one. A popular regularization function is due
to~\cite{scholtes}, where
$\Phi(G_{i}, H_{i}, \sigma) = G_{i}H_{i} - \sigma$. Regardless of the
function however, the relaxation parameter must be carefully selected
or adjusted through an iterative process and there remains a risk that 
exact feasibility is achieved only in the limit as the 
relaxation parameter tends to zero.

Regardless of the solution approach, it is however often convenient
to reformulate MPVCs in order to separate the nonlinear
constraints from the combinatorial structure of
the feasible set imposed by the vanishing constraints (VC) and
controlling constraints (CC). Specifically, by introducing additional
(otherwise unconstrained) slack variables, we can ensure that the
vanishing and controlling constraints $G_{i}$ and $H_{i}$ consist of
individual variables, \ie $G_{i}(x) = s_{\idxVC{i}}$ and
$H_{i}(x) = s_{\idxCC{i}}$ for $\idxVC{i}, \idxCC{i} = 1, \ldots,
n$. We combine the slack variables in the vector $s\in\Real^{2l}$.  We
may require that the slack variable bounds are non-trivial in the
sense that the lower bounds $s_\mathrm{l}\in\Real^{2l}$ are zero at
all entries to the indices of $\idxCC{i}$ and not positive for the
entries $\idxVC{i}$ as well the upper bounds
$s_\mathrm{u}\in\Real^{2l}$ are not negative respectively at all
entries.  We define
$C^\mathrm{s} \define \{s \in \Real^{2l} \mid s_\mathrm{l} \leq s \leq
s_\mathrm{u} \}$ as the convex set of the slack variables.
Such a structure of~\eqref{eq:mpvc} is called its \emph{vertical form}.
\begin{equation}
  \label{eq:mpvcs}
  \tag{MPVCS}
  \begin{aligned}
    \min_{x \in C,s \in C^\mathrm{s}} \quad & f(x) \\
    \st \quad & g(x) = 0, && \\
    & H_{i}(x)-s_{\idxCC{i}}= 0, && \text{for } i = 1, \ldots , l, \\
    & G_{i}(x)-s_{\idxVC{i}}= 0, && \text{for } i = 1, \ldots , l, \\
    & s_{\idxCC{i}}\geq0, && \text{for } i = 1, \ldots , l, \\
    & s_{\idxVC{i}}s_{\idxCC{i}}\geq0, && \text{for } i = 1, \ldots , l. \\
  \end{aligned}
\end{equation}
Throughout this paper, we will follow an approach similar
to~\cite{kirches_sqp}, consisting of incorporating constraints and
their bounds into local subproblems in different ways, depending on
the current index set. However, in our approach, we reduce the
complexity by focusing on the biactive points and following actions
along the solution path. Specifically, after discussing the
theoretical foundations of the problem in
Section~\ref{sec:constraints_qualifications}, we will propose a
practical algorithm in Section~\ref{sec:algorithm}, which we base on
the concept of gradient flows. After discussing
implementational details in Sections~\ref{sec:implementation}
and~\ref{sec:nonlinear}, we go on
to present numerical results on truss topology design problems in
Section~\ref{sec:num_ex}.

\Paragraph{Notation}

As in \cite{sequential_homotopy}, we let $\primalProd{x}{x'}$ be a
scalar product inducing the norm $\primalNorm{\cdot}$ for two points
$x, x' \in X$ in a real finite dimensional Hilbert space $X$. Likewise we get a scalar
product real finite dimensional Hilbert spaces $Y,\,\tilde{S}$.

\section{Constraint Qualifications}\label{sec:constraints_qualifications}

We begin by introducing a concept of stationarity in order to obtain
practically useful first-order optimality conditions for
MPCCs. Specifically, we rely on the MPVC \emph{strong stationarity}
concept introduced in~\cite{mpvc_stat_cons}. To this end, we let
\begin{equation}
  \label{eq:nmpv_nlp}
  \tag{MPVC-NLP}
  \min \: \: \tilde{f}(\tilde{x}) \:\: \st \: \: \tilde{g}(\tilde{x}) = 0, \:\: \tilde{h}(\tilde{x}) \geq 0
\end{equation}
be \eqref{eq:mpvc} as a standard NLP, where
$\tilde{f} : \Real^{\tilde{n}} \to \Real$,
$\tilde{g} : \Real^{\tilde{n}} \to \Real^{\tilde{m}}$, and
$\tilde{h} : \Real^{\tilde{n}} \to \Real^{\tilde{p}}$ and
$\tilde{M} \define \{ x \mid \tilde{g}(\tilde{x}) \leq 0, \tilde{h}(\tilde{x}) = 0 \}$ be its set of feasible points.  The
\emph{linearized cone} \cite{mpvc} of the reformulation at a point
$\tilde{x} \in \tilde{M}$ is given by
\begin{equation}\label{eq:linearized_cone}
  \begin{aligned}
      L(\tilde{x}) \define \{d \in \Real^{\tilde{n}} \mid & 
    \nabla \tilde{g}_{i}(\tilde{x}) d = 0 \text{ for } j = 1, \ldots, \tilde{p}, \text{ and}\\
    & \nabla \tilde{h}_{i}(\tilde{x}) d \leq 0 \text{ for } j\;\st\; \tilde{h}_{i}(\tilde{x}) = 0 \}\\
    = \{d \in \Real^{\tilde{n}} \mid &  \nabla g_j (\tilde{x})d = 0 \text{ for }j=1,\ldots,m,\\
    & \nabla H_i(\tilde{x}) d = 0 \text{ for } i \in \IndexSet_{0-}(\tilde{x}),\\
    & \nabla H_i(\tilde{x}) d \leq 0 \text{ for } i \in \IndexSet_{00}(\tilde{x})\cup\IndexSet_{0+}(\tilde{x}),\\
    & \nabla G_i(\tilde{x}) d \leq 0 \text{ for } i \in \IndexSet_{+0}(\tilde{x}) \}\\
  \end{aligned}
\end{equation}
Furthermore, the \emph{tangent cone} of a set $M \subseteq \Real^{n}$ at $x \in M$ is given by
\begin{equation*}
  \begin{aligned}
      T(M ,x) \define \{d \in \Real^{n} \mid &
  \text{ there exist sequences} (x_{k}) \in M, (\lambda_{k}) \subset \Real_{\geq 0}
\\
    &    \text{ with }  x_k \to x
  \text{ and } \lambda_{k} (x_{k} - x) \to d \text{ for } k \to \infty
  \}
  \end{aligned}
\end{equation*}
and the \emph{polar cone} of a cone $M \subseteq \Real^{\tilde{n}}$ is given by
\begin{equation*}
  M^{-} \define \{d \in \Real^{\tilde{n}} \mid \langle d, x \rangle \leq 0 \text{ for all } x \in M\},
\end{equation*}
where $\langle \cdot, \cdot \rangle$ denotes the standard scalar product.
A point $\tilde{x} \in \tilde{M}$ is said to satisfy
the \emph{Guignard Constraint Qualification} (GCQ)~\cite{guignard_stat_cons} iff $T^{-}(\tilde{M}, \tilde{x}) = L^{-}(\tilde{x})$. This more geometric condition is significantly weaker than both LICQ and MFCQ and has a good chance to hold in practice (see~\cite{on_the_abadie} for a discussion). Based on this constraint qualification the following stationarity condition holds:
\begin{theorem}[Theorem 2.5 in~\cite{mpvc_stat_cons}]
  \label{thm:mpvc_kkt}
  Let $x^{*}$ be a local minimum of~\eqref{eq:mpvc} such that GCQ holds at $x^{*}$. Then there exist Lagrange multipliers $y^{*} \in \Real^{m}$,
  $\eta_{H}^{*}, \eta_{G}^{*} \in \Real^{l}$
  such that
  \begin{equation*}
    - \nabla_{x} \Lag_{\MPVC}(x^{*}, y^{*}, \eta_{G}^{*}, \eta_{H}^{*}) \in T^{-}(C, x^{*}),
  \end{equation*}
  where
  \begin{equation*}
    \Lag_{\MPVC}(x, y, \eta) \define f(x) + \langle y, g(x) \rangle + \sum_{i = 1}^{l} (\eta_{H})_i H_{i}(x) + \sum_{i = 1}^{l} (\eta_{G})_i G_{i}(x)
  \end{equation*}
  and
  \begin{equation*}
    \begin{aligned}
      (\eta_{H})_i
      \begin{cases}
        = 0 & \text{ if } i \in \IndexSet_{+} \\
        \leq 0 & \text{ if } i \in \IndexSet_{00} \cup \IndexSet_{0+} \\
        \text{free} & \text{ if } i \in \IndexSet_{0-}
      \end{cases}
      \quad
      \text{ and }
      \quad
      (\eta_{G})_i
      \begin{cases}
        = 0 & \text{ if } i \in \IndexSet_{0} \cup \IndexSet_{++} \\
        \leq 0 & \text{ if } i \in \IndexSet_{+0}.
      \end{cases}
    \end{aligned}
  \end{equation*}
\end{theorem}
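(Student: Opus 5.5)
The plan is the standard route from Guignard's condition to a KKT-type system. We view \eqref{eq:mpvc} as \eqref{eq:nmpv_nlp}, with the box constraints $x \in C$ carried either as part of $\tilde h$ or as an abstract convex constraint, and then compute the polar of the linearized cone explicitly via Farkas' lemma.

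\textbf{Step 1: first-order necessary condition.} Since $x^{*}$ is a local minimum, every direction $d \in T(\tilde M, x^{*})$ satisfies $\nabla f(x^{*}) d \geq 0$. This follows by taking sequences $x_k \to x^{*}$ in $\tilde M$ with $\lambda_k (x_k - x^{*}) \to d$ and Taylor-expanding $f$. Hence $-\nabla f(x^{*}) \in T^{-}(\tilde M, x^{*})$. By GCQ this polar equals $L^{-}(x^{*})$, so $-\nabla f(x^{*}) \in L^{-}(x^{*})$.

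\textbf{Step 2: compute $L^{-}(x^{*})$ with Farkas.} We use the second description in \eqref{eq:linearized_cone}, augmented by the linearized box constraints, i.e.\ the tangent cone $T(C,x^{*})$, which is polyhedral since $C$ is a box. This makes $L(x^{*})$ a polyhedral cone cut out by:
\begin{itemize}
\item equalities $\nabla g_j d = 0$ for all $j$ and $\nabla H_i d = 0$ for $i \in \IndexSet_{0-}$;
\item inequalities $\nabla H_i d \le 0$ for $i \in \IndexSet_{00}\cup\IndexSet_{0+}$ and $\nabla G_i d \le 0$ for $i\in\IndexSet_{+0}$;
\item the condition $d \in T(C,x^{*})$.
\end{itemize}
Farkas' lemma (the polar of a finitely generated intersection is the sum of the polars) then gives
\[
L^{-}(x^{*}) = \Bigl\{ \textstyle\sum_j y_j \nabla g_j + \sum_{i} \mu_i \nabla H_i + \sum_{i\in\IndexSet_{+0}} \nu_i \nabla G_i + w \Bigr\},
\]
where $y$ is free, $\mu_i$ is free on $\IndexSet_{0-}$ and $\mu_i \ge 0$ on $\IndexSet_{00}\cup\IndexSet_{0+}$, $\nu_i\ge 0$, and $w \in T^{-}(C,x^{*})$. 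All sums are polyhedral, so they are closed and no closure is needed.

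\textbf{Step 3: sign bookkeeping.} We set $y^{*} = -y$ and $(\eta_H^{*})_i = -\mu_i$ on $\IndexSet_0$, zero on $\IndexSet_+$. We set $(\eta_G^{*})_i = -\nu_i$ on $\IndexSet_{+0}$, zero elsewhere. Then $-\nabla f(x^{*}) = -\sum_j y_j^{*}\nabla g_j - \sum_i (\eta_H^{*})_i\nabla H_i - \sum_i (\eta_G^{*})_i\nabla G_i + w$, which rearranges to $-\nabla_x\Lag_{\MPVC} = w \in T^{-}(C,x^{*})$. The resulting signs are exactly those in the statement: $\eta_H \le 0$ on $\IndexSet_{00}\cup\IndexSet_{0+}$, free on $\IndexSet_{0-}$, $\eta_G\le 0$ on $\IndexSet_{+0}$.

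\textbf{Main obstacle.} The only delicate point is bookkeeping rather than analysis. The paper's sign conventions in \eqref{eq:linearized_cone} (written with $\le 0$) must be matched consistently, and the box $C$ must be included inside the GCQ / linearized cone so that the multiplier $w$ for $C$ appears. I would handle this by including the active bounds of $C$ as extra linear inequalities in $\tilde h$. Linear constraints leave the tangent-versus-linearized relationship unchanged, and their Farkas multipliers assemble precisely into an element of $T^{-}(C,x^{*})$, the normal cone of the box.
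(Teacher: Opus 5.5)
Your overall route (first-order condition on the tangent cone, then GCQ, then Farkas on the polyhedral linearized cone, then reading off multipliers) is the standard one; the paper gives no proof and just imports the result. However, the sign bookkeeping you flag as the only delicate point actually fails in two places, and the two errors cancel to produce the claimed pattern.

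First, the second description in \eqref{eq:linearized_cone} has its inequalities reversed. For $i\in\IndexSet_{0}$, the constraint $H_i\ge 0$ with $H_i(x^{*})=0$ linearizes to $\nabla H_i(x^{*})d\ge 0$. For $i\in\IndexSet_{+0}$, the constraint $G_iH_i\ge 0$ linearizes to $H_i(x^{*})\nabla G_i(x^{*})d\ge 0$, i.e.\ $\nabla G_i(x^{*})d\ge 0$. For $i\in\IndexSet_{0-}$, the pair $\nabla H_i d\ge 0$, $G_i(x^{*})\nabla H_i d\ge 0$ forces $\nabla H_i d=0$. A one-line check: for $\min x$ s.t.\ $x\ge 0$, i.e.\ $H(x)=x$, $G\equiv 1$, the $\le 0$ version gives $L=(-\infty,0]$ while $T=[0,\infty)$. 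With your $\le 0$ cone, Farkas gives $-\nabla f(x^{*})=\sum_j y_j\nabla g_j+\sum_i\mu_i\nabla H_i+\sum_i\nu_i\nabla G_i+w$ with $\mu_i,\nu_i\ge 0$. The only identification yielding $-\nabla_x\Lag_{\MPVC}=w$ is then $y^{*}=y$, $\eta_H^{*}=\mu$, $\eta_G^{*}=\nu$, which has the \emph{opposite} signs.

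Second, Step~3 masks this. Your identity $-\nabla f=-\sum_j y^{*}_j\nabla g_j-\sum_i(\eta_H^{*})_i\nabla H_i-\sum_i(\eta_G^{*})_i\nabla G_i+w$ rearranges to $-\nabla f+\sum_j y^{*}_j\nabla g_j+\sum_i(\eta_H^{*})_i\nabla H_i+\sum_i(\eta_G^{*})_i\nabla G_i=w$. That is $-\nabla_x\Lag_{\MPVC}$ evaluated at $(-y^{*},-\eta_G^{*},-\eta_H^{*})$, not at $(y^{*},\eta_G^{*},\eta_H^{*})$. So as written the sign conditions are not established.

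The repair is short. Derive $L(x^{*})$ directly from the constraints as above, so that Farkas yields
\[
  -\nabla f(x^{*})=\sum_{j} y_j\nabla g_j(x^{*})-\sum_{i\in\IndexSet_{0}}\mu_i\nabla H_i(x^{*})-\sum_{i\in\IndexSet_{+0}}\nu_i\nabla G_i(x^{*})+w
\]
with $\mu_i\ge 0$ on $\IndexSet_{00}\cup\IndexSet_{0+}$, $\mu_i$ free on $\IndexSet_{0-}$, $\nu_i\ge 0$, and $w\in T^{-}(C,x^{*})$. Then set $y^{*}=y$ (no sign flip), $(\eta_H^{*})_i=-\mu_i$ on $\IndexSet_0$ and $(\eta_G^{*})_i=-\nu_i$ on $\IndexSet_{+0}$, zero elsewhere. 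Now $-\nabla_x\Lag_{\MPVC}(x^{*},y^{*},\eta_G^{*},\eta_H^{*})=w$ holds with exactly the stated signs.

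A smaller point concerns your remark that linear constraints ``leave the tangent-versus-linearized relationship unchanged''. If it is meant to say that GCQ survives adding the box, that is false in general: $\{y=x^2\}$ satisfies GCQ at the origin, but $\{y=x^2,\ y\le 0\}$ does not. You should simply read the hypothesis as GCQ for the full system including the active bounds of $C$. That reading is natural, since $x\in C$ is part of \eqref{eq:mpvc}.
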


\section{A Practical Algorithm}
\label{sec:algorithm}

\Paragraph{Gradient Flows}

In order to solve MPVCs we examine primal-dual gradient /
anti-gradient flows~\cite{sequential_homotopy}. We will therefore
begin by giving an introduction regarding gradient flows and their use
in nonlinear programming. For a nonlinear program
\begin{equation}
  \label{eq:nlp}
  \tag{NLP}
  \min \:\: f(x)
  \: \: \st \:\: g(x) = 0, \text{ and } x \in C \\
\end{equation}
we let $\Lag(x, y) \define f(x) + \dualProd{y}{g(x)}$ be its Lagrangian function,
and $\Lag^{\rho}(x, y) \define \Lag(x, y) + \tfrac{\rho}{2} \dualNorm{g(x)}^{2}$
its \emph{augmented Lagrangian} for a parameter $\rho \geq 0$. Based on the
augmented Lagrangian the \emph{gradient / anti-gradient flow} emanating from the initial values $x_{0} \in C$ and $y_{0} \in \Real^{m}$
is defined by the following initial value problem for projected differential equations
\begin{equation*}
  \begin{aligned}
    \dot{x}(t) &= P_{T(C, x(t))} \left( -\nabla_{x} \Lag^{\rho} (x(t), y(t)) \right) \text{, and }\:
    \dot{y}(t) = \nabla_{y} \Lag^{\rho}(x(t), y(t)), \text{ with }\\
    x(0) &= x_0 \text{, and } y(0) = y_0.
  \end{aligned}
\end{equation*}
where $P_{M}(x)$ denotes the $\primalProd{\cdot}{\cdot}$-orthogonal
projection of $x$ onto the convex set $M$. We would like to point out
that $P_{M}$ is in general non-differentiable. The
projection operator is however \emph{semismooth}~\cite{semismooth}
with a set-valued derivative in the sense of Clark, enabling the
solution of the equations. Furthermore, it
was shown in~\cite{sequential_homotopy} that the equilibrium points of
this flow, \ie those with $\dot{x}(t) = 0$ and $\dot{y}(t) = 0$ are
precisely the stationary points of~\eqref{eq:nlp}.  Note that the flow
is smooth as long as $T(C, x(t))$ stays constant and non-smooth but
continuous during changes of the tangent cone, which occur during
changes in the active set of the variable bounds constituting $C$.

To find a stationary point using gradient flows, we solve the
initial value problem until such a time where $\dot{x}(t)$ and
$\dot{y}(t)$ have been reduced to an acceptable level, or some other
convergence criterion is satisfied.  In principle, any integration
method, such as for example a projected forward Euler method can be
applied to gradient flow problems.  However, since gradient /
anti-gradient flows tend to be stiff especially for $\rho \gg 1$, an implicit method is preferable
in practice.  We therefore choose to compute projected backward Euler
steps for a step size of $\Delta t > 0$ and a current point
$\hat{x} \in C$, $\hat{y} \in \Real^{m}$ by solving the equations
\begin{equation*}
  x - P_{C} \left( \hat{x} - \Delta t \nabla_x \Lag^{\rho}(x, y) \right) = 0
  \:  \text{, and }\:
  y - \hat{y} - \Delta t \nabla_{y} \Lag^{\rho}(x, y) = 0.
\end{equation*}
The solution of these equations involving the semismooth operator
$P_{C}$ can be carried out using a semismooth Newton (SSN)
method~\cite{ssn}.  Unfortunately, the use of SSN methods
poses its own challenges, in particular due to the fact that these
methods are very hard to globalize and suffer from small radii of
local convergence.  We therefore opt to interpret the projected
backward Euler system as a necessary optimality condition of the
following regularized variant of~\eqref{eq:nlp},
\begin{equation}
  \tag{Sub-NLP}
  \label{eq:sub_nlp}
  \begin{aligned}
    \min_{\mathclap{x \in C, w \in \Real^{m}}} & \quad \quad f^{\rho}(x) + \lambda \left[ \tfrac{1}{2} \primalNorm{x - \hat{x}}^{2} + \tfrac{1}{2} \dualNorm{w - \hat{y}}^{2} \right] \\
    \st & \quad \quad c(x) + \lambda w = 0
  \end{aligned}
\end{equation}
where $f^{\rho}(x) \define f(x) + \tfrac{\rho}{2} \dualNorm{c(x)}^{2}$ and $\lambda\define 1/\Delta t$ (see \cite{sequential_homotopy}).
Thus, to compute the step, we can solve this problem with an off-the-shelf NLP solver. The regularization with $\lambda$ ensures 
that the problem always satisfies LICQ for all $\lambda>0$ (under standard assumptions on $f$, $c$, 
and $C$). We would like to point out that the variable $w$ can in principle be eliminated from the formulation entirely, 
yielding a box-constrained problem instead. While this approach seems sensible, this reformulation does however greatly 
increase the nonlinearity of the subproblem. As a result, a significantly larger value of $\lambda$ is required for 
regularization, impeding the convergence of the gradient flow itself.

\Paragraph{Branch Decomposition}

In the following, we examine the combinatorial structure
of~\eqref{eq:mpvcs} in vertical form. Clearly, the feasible region with
respect to the slack variable constraints $s_{\idxVC{i}} \geq 0$ and
$s_{\idxVC{i}}s_{\idxCC{i}} \geq 0$ can be decomposed into two \emph{branches}. 
We define for every pair $(s_{\idxVC{i}},s_{\idxCC{i}})$ of slack variables for $i=1,\dots,l$, the \emph{branches} 
\begin{equation}\label{eq:convex_pieces}
  \begin{aligned}
    C_{+}(i) &\define \{ s_{\idxCC{i}} \geq 0 \text{ and } s_{\idxVC{i}} \geq 0 \}, \text{ and} \\
    C_{0}(i) &\define \{ s_{\idxCC{i}} = 0 \text{ and } s_{\idxVC{i}} \leq 0 \},
  \end{aligned}
\end{equation}
where the intersection $C_{+}(i) \cap C_{0}(i)$ consists of the biactive points, \ie 
$i \in \IndexSet_{00}(x)$ for $H_i(x)$ and $G_i(x)$. We call $C_{+}(i)$ the \emph{upper branch} and $C_{0}(i)$ the \emph{lower branch}. 
It is clear that we can always modify an instance to ensure that these conditions are met.

The feasible region for the slack variables of~\eqref{eq:mpvcs} can be decomposed into a number of 
convex pieces based on the individual branches. Specifically, for
a \emph{signature} $\signature \in \{0, 1\}^{l}$, we can set
\begin{equation*}
C^\mathrm{b}(\signature) \define C^\mathrm{s} \cap \left( \bigtimes_{i = 1}^{l}
\begin{cases}
  C_{+}(i) & \text{ if } \signature_{i} = 1 \\
  C_{0}(i) & \text{ if } \signature_{i} = 0,
\end{cases}
  \right)
\end{equation*}
to be the convex piece associated with $\signature$. For this piece, we can
investigate the optimization problem
\begin{equation}
  \label{eq:branch_nlp}
  \tag{NLP$\sigma$}
    \begin{aligned}
    \min_{x \in C,s \in C^\mathrm{b}(\signature)} \quad & f(x) \\
    \st \quad & g(x) = 0  &&\\
    & H_{i}(x)-s_{\idxCC{i}}= 0 && \text{for } i = 1, \ldots , l \\
    & G_{i}(x)-s_{\idxVC{i}}= 0 && \text{for } i = 1, \ldots , l 
  \end{aligned}
\end{equation}
over the convex domain $C$ and $C^\mathrm{b}(\signature)$. This optimization problem no longer contains
any vanishing constraints, and standard arguments show that local
minima must satisfy the stationarity criterion of
$-\Lag(x^{*},s^{*},y^{*}) \in T^{-}(C \times C^\mathrm{b}(\signature), (x^{*},s^{*}))$. As a result, we could
try to solve~\eqref{eq:mpvcs} by enumerating over its convex pieces of slack variables
while solving a nonlinear program for each individual one and checking
for stationarity. Of course, the number of $2^{l}$ pieces makes such
an approach intractable in practice.

It is however worth discussing the relationship between the
stationarity criterion for~\eqref{eq:mpvc} from Theorem~\ref{thm:mpvc_kkt} 
and its counterpart for~\eqref{eq:branch_nlp}. Since problem~\eqref{eq:mpvcs} is a reformulation of~\eqref{eq:mpvc},
the Lagrange multipliers of the slack variables in a solution of~\eqref{eq:mpvcs} coincide with the Lagrange
multipliers of vanishing and controlling constraints in~\eqref{eq:mpvc}. We therefore only need to discuss
the relationship between the slack variables of~\eqref{eq:mpvcs} and~\eqref{eq:branch_nlp}.
For short we denote  $g^{x,*} \define -\nabla_{x} \Lag(x^{*},s^{*}, y^{*}) =-\nabla_{x} \Lag_{\MPVCS}(x^{*},s^{*}, y^{*})$,
$g^{s,*} \define -\nabla_{s} \Lag(x^{*},s^{*}, y^{*})$, and $\overline{g}^{s,*} =-\nabla_{s} \Lag_{\MPVCS}(x^{*},s^{*}, y^{*})$. 
The equality of the Lagrange gradients $g^{x,*}$ holds by definition of~\eqref{eq:branch_nlp}.
Since in both problem formulations~\eqref{eq:mpvcs} and~\eqref{eq:branch_nlp} the convex set $C$ does not change we only need to
examine the $j$-th component of the respective Lagrangian gradients $g^{s,*}$ for active constraints and given signature $\signature$. The
following cases arise:
\begin{itemize}
\item If $j$ is part of the slack variables corresponding to $H_i$ and $G_i$ for an $i\in1,\dots,l$, \ie $j=\idxCC{i}$ or
  $j = \idxVC{i}$, such that $C_{+}(i)\cap C^\mathrm{s}\subseteq C^\mathrm{b}(\signature)$. 
  Consequently, $i$ must be in one of the sets $\IndexSet_{00}$, $\IndexSet_{0+}$,
  $\IndexSet_{+0}$, or $\IndexSet_{++}$.  
  For $j = \idxCC{i}$ and $i \in \IndexSet_{+0}$ the case is symmetric and we have $(\eta_{H})_{i}=0$. 
  For $j = \idxVC{i}$ and $i \in \IndexSet_{+0}$, we can observe that 
  $T^{-}(C\times C^\mathrm{b}(\signature), (x^{*},s^{*}))$ requires the $j$-th
  component of $g^{s,*}$ to be non-positive, whereas that component has
  to be zero in $T^{-}(C\times C^\mathrm{s}, (x^{*},s^{*}))$ due to the non-trivially chosen
  variables bounds. Since the otherwise unconstrained
  $(\eta_{G})_{i}$ can be set to any non-positive value, the residuals
  can be made to coincide. 
  If $j = \idxVC{i}$ corresponds to the VC $G_i$ and 
  $i \in \IndexSet_{0+}$, the corresponding entry of $(\eta_{G})_{i}$ 
  must be zero and $(\eta_{H})_{i}$ greater equal zero and the stationarity residuals again coincide. 
  Thus, the only remaining case occurs for
  $i \in \IndexSet_{00}$: Here, $T^{-}(C\times C^\mathrm{b}(\signature), (x^{*},s^{*}))$ requires both
  $g^{s,*}_{\idxCC{i}}$ and $g^{s,*}_{\idxVC{i}}$ to be non-positive, whereas
  $T^{-}(C\times C^\mathrm{s}, (x^{*},s^{*}))$ requires $\overline{g}^{s,*}_{\idxVC{i}}$ to be zero, which is in general  
  not possible. Thus, if $g^{s,*}_{j} < 0$ and $j = \idxVC{i}$,
  the stationary point for~\eqref{eq:branch_nlp} in the current convex
  piece does not coincide with a stationary point for~\eqref{eq:mpvcs}.
\item If $j$ is part of the slack variables corresponding to a vanishing constraint $\idxVC{i}$ or controlling constraint $\idxCC{i}$ such that
  $C_{0}(i)\cap C^\mathrm{s} \subseteq C(\signature)$. The index set of interest is $i \in \IndexSet_{00}$:
  As before we need to have $g^{s,*}_{\idxVC{i}} =0$, since $T^{-}(C\times C^\mathrm{s}, (x^{*},s^{*}))$ requires $\overline{g}^{s,*}_{\idxVC{i}}=0$.
  If $g^{s,*}_{j} > 0$ and $j = \idxVC{i}$ the stationary point for~\eqref{eq:branch_nlp} in the current convex
  piece is therefore not stationary for~\eqref{eq:mpvcs}. 
\end{itemize}
Based on these observations it is easy to see that the stationarity
criterion for~\eqref{eq:mpvc} is stricter than that for individual
branches in the sense that a stationary point for~\eqref{eq:mpvc} is
stationary for~\eqref{eq:branch_nlp} for all fitting choices of $\signature$, i.e., $s^*\in C^\mathrm{s}\cap C^\mathrm{b}(\signature)$, whereas
the converse does not hold in general. Furthermore, the sign of
$g^{s,*}$ in the direction of the $\idxVC{i}$-entries indicates the
non-stationarity of the primal-dual variables $(x^{*},s^{*},y^{*})$.

\Paragraph{Piecewise Gradient Flows}

To solve~\eqref{eq:mpvc}, we begin at an initial guess
$x_{0} \in C$ and $s_{0} \in C^\mathrm{b}(\signature)$ for some signature $\signature$. This
point does not need to satisfy the nonlinear constraint $g$. Based on
the set $C\times C^\mathrm{b}(\signature)$ and an initial estimate $y_{0} \in \Real^{m}$, we can
consider the gradient flow defined by the equations
\begin{equation*}
  \begin{aligned}
  (\dot{x}(t),\dot{s}(t)) &= P_{T(C\times C^\mathrm{b}(\signature), (x(t),s(t)))} \left( -\nabla_{x,s} \Lag^{\rho} (x(t), s(t), y(t)) \right) , \quad\\
    \dot{y}(t) &= \nabla_{y} \Lag^{\rho}(x(t), s(t), y(t))\text{ with }\\
    x(0) &= x_0,~s(0)=s_0 \text{, and } y(0) = y_0.\\
  \end{aligned}
\end{equation*}
This flow is of course designed to solve
problem~\eqref{eq:branch_nlp}.  However, as discussed earlier, even a
stationary point for $C\times C^\mathrm{b}(\signature)$ is not generally going to be a solution
of~\eqref{eq:mpvcs}. It is therefore necessary for the flow to leave
one convex piece and switch to another while preserving continuity.
Of course a continuous switch between the different branches can only
be conducted at times $t > 0$ where the slack variables $s(t)$ of the
flow becomes bi-active for some $i$. At such a time, we are free to switch the
signature by flipping one or more branches for
$i \in \IndexSet_{00}(x(t))$ while maintaining continuity (but not
smoothness) of the resulting flow, which we combine with a detection
of the stationarity of~\eqref{eq:mpvcs}, to prevent the
continuation of the flow into any other branch. Thus, for a given
primal-dual $(x,s,y)$ with signature $\signature$ we start by determining
whether the pair is stationary according to the conditions of
Theorem~\ref{thm:mpvc_kkt}. To this end, the multipliers $\eta_{H}$
and $\eta_{G}$ are chosen in such a way as to minimize the absolute
value of the corresponding components of
$g_{\MPVCS} \define - \nabla_{x,s} \Lag_{\MPVCS}(x,s,y)$ subject to the
sign constraints of the multipliers. The stationarity residual is then
given by the distance of $g_{\MPVCS} - P_{T^{-}(C\times C^\mathrm{s}, (x^{*},s^{*}))}(g_{\MPVCS})$
with respect to an appropriate norm. Once this residual is below
some threshold $\epsilon$, the solution is (approximately)
stationary. If stationarity is not achieved, switching candidates are
identified using $-\nabla_{x,s} \Lag^{\rho}(x,s,y)$, yielding a new
signature $\signature'$ and updated branches:

\begin{itemize}
\item If $i \in \IndexSet_{00}(x)$ and $\signature_{i}=1$ we switch to the lower branch $C_0(i)$ if
  \begin{equation*}
    (-\nabla_{s} \Lag^{\rho}(x,s,y))_{\idxCC{i}}\leq 0 \quad\textrm{and}\quad
    (-\nabla_{s} \Lag^{\rho}(x,s,y))_{\idxVC{i}} < 0,
  \end{equation*} which means the slack variable $s_{\idxVC{i}}$ will vanish.
  We update the signature by setting $\signature'_{i} = 0$.
\item If $i \in \IndexSet_{00}(x)$ and $\signature_{i} = 0$ we switch to the upper branch $C_+(i)$ if 
  \begin{equation*}
    (-\nabla_{s} \Lag^{\rho}(x,s,y))_{\idxVC{i}} > 0, 
  \end{equation*} or \begin{equation*}
    (-\nabla_{s} \Lag^{\rho}(x,s,y))_{\idxVC{i}} = 0 \quad \text{and}\quad (-\nabla_{s} \Lag^{\rho}(x,s,y))_{\idxCC{i}} > 0,
  \end{equation*} which means the slack variable $s_{\idxVC{i}}$ is free in the next piece of the gradient flow.
  We update the signature by setting $\signature'_{i} = 1$.
\item
  The bounds and signature values of all other variables are kept constant.
\end{itemize}
Once  $\signature'$ has been chosen, we continue with the next piece
of our piecewise gradient flow at the point $(x,s, y)$ (with
$(x,s) \in C\times C^\mathrm{b}(\signature')$) and integrate along the signature-modified initial value problem by setting \begin{equation*}
\begin{aligned}
  (\dot{x}(t),\dot{s}(t)) &= P_{T(C\times C^\mathrm{b}(\signature'), (x(t),s(t)))} \left( -\nabla_{x,s} \Lag^{\rho} (x(t), s(t), y(t)) \right) , \quad\\
    \dot{y}(t) &= \nabla_{y} \Lag^{\rho}(x(t), s(t), y(t))\text{ with }\\
    x(0) &= x,~s(0)=s \text{, and } y(0) = y.\\
  \end{aligned}
\end{equation*}
Note, $\dot{y}(t)$ is independent of the choice of signatures and remains unchanged.

\section{Implementational Aspects}\label{sec:implementation}

In the following, we discuss some aspects of the implementation required for the solution of particularly challenging instances.

\Paragraph{Algorithmic Parameters} The choice of the algorithmic
parameters $\lambda$ and $\rho$ has a large impact on the
convergence speed of our method. The parameter $\lambda$ (the inverse
of the step size $\Delta t$) controls the progress of the gradient
/ anti-gradient flow towards stationary points. Thus, we would like
to decrease $\lambda$ as far as possible. However, if the risk of ill-conditioned \eqref{eq:sub_nlp} increases. 
To prevent divergence of subsystem solvers,
we propose a simple heuristic to update $\lambda$ after each iteration:
Based on an initial value of $\lambda^{(0)} = \lambda_{\init}$, we set
\begin{equation*}
  \lambda^{(k + 1)} \define
  \begin{cases}
    \lambda^{-1}_{\inc} \cdot \lambda^{(k)} & \text{ if current \eqref{eq:sub_nlp} was solved successfully} \\
    \lambda_{\inc} \cdot \lambda^{(k)} & \text{ otherwise,} \\
  \end{cases}
\end{equation*}
where we set $\lambda_{\init} = \num{0.1}$ and
$\lambda_{\inc} = \num{2}$.

In contrast to the value of $\lambda$, the method is less dependent on the choice of the parameter $\rho$. In many cases, a small constant value of
$\rho$ is sufficient. As an alternative, we propose a simple rule
to update $\rho$ which works by ensuring that, whenever possible,
the value $\tfrac{1}{2}\|c(x(t))\|^2$ is non-increasing along the trajectory.
To ensure this property, we note that
\begin{equation*}
  \frac{\mathrm{d}}{\mathrm{d}t} \frac{1}{2}\|c(x(t))\|^2=
  - \langle d, P' (\nabla f(x(t)) + y(t)^{T} J_{c}(x(t))) \rangle
  - \rho \langle d, P'd \rangle
\end{equation*}
where $d \define c(x(t))^{T} J_{c}(x(t))$ and $P' \in \Real^{n \times n}$ is an element of the set-valued derivative of the projection operator $P$. Consequently, as long
as $\langle d, P'd \rangle < 0$ we can increase $\rho$ sufficiently in order to ensure that $\tfrac{1}{2}\|c(x(t))\|^2$ does not increase. In practice, our approach is more conservative and we increase $\rho$ at most by a factor of $\rho_{\inc} = 10$ per iteration.

\Paragraph{Scaling}
The aspect of scaling plays an important role during the implementation
of nonlinear programming codes~\cite[Sec. 3.8]{ipopt}, as it can
dramatically speed up the convergence on badly scaled real-world
instances. To include scaling in our algorithm, we employ the scalar
products $\primalProd{\cdot}{\cdot}$ and $\dualProd{\cdot}{\cdot}$
whose induced norms appear in \eqref{eq:sub_nlp}. Since the
stationarity conditions are independent of the underlying norm of the
space, we are free to employ these scalar products to improve
convergence. Specifically, rather than the standard Euclidean scalar
product, it may be convenient to use scalar products induced by
scaling matrices $D_{x}$, $D_{s}$ and $D_{y}$ instead and solve \begin{equation}\label{eq:scaled_nlp}
  \begin{aligned}
    \min_{\mathclap{x \in C, s\in C^\mathrm{b}(\signature), w \in \Real^{m}}} \quad& \quad \quad f^{\rho}(x,s) + 
    \lambda \big( \tfrac{1}{2} \matrixScalarprod{x - \hat{x}}{D_x} \\
    &\quad\quad\quad+ \tfrac{1}{2} \matrixScalarprod{s - \hat{s}}{D_s} + \tfrac{1}{2} \matrixScalarprod{w - \hat{y}}{D_y}\big) \\
    \st \quad& \quad \quad c(x,s) + \lambda w = 0
  \end{aligned}
\end{equation}
Furthermore, it can be advantageous to scale the variables itself.

\section{Vanishing Constraints}\label{sec:nonlinear}

Let us consider an instance of~\eqref{eq:mpvcs} in
vertical form to use this additional structure to realize an efficient implementation.

\Paragraph{Initializing Branches} To find a suitable branch and feasible starting values for the slack variables we evaluate
at a given primal initial guess the controlling and vanishing constraints: \begin{itemize}
  \item If $H_i(x)>0$ we set $s_{\idxCC{i}}= H_i(x),~ s_{\idxVC{i}}=\max\{0,G_i(x)\}$ and we choose the branch $C_+(i)$.
  \item If $H_i(x)=0$ we set $s_{\idxCC{i}} = 0,~ s_{\idxVC{i}}=G_i(x)$. Depending on the value $s_{\idxVC{i}}$ 
  we choose $C_+(i)$ if $s_{\idxVC{i}}>0$ and $C_0(i)$ otherwise. 
  \item If $H_i(x)<0$ we set $s_{\idxCC{i}} = 0$ and choose the lower branch $C_0(i)$ and set $s_{\idxVC{i}}=\min\{0,G_i(x)\}$.  
\end{itemize}

\Paragraph{Reduced Subsequent NLPs}

Note, that the lower branch \( C_0 \) from \eqref{eq:convex_pieces} corresponds to the set of vanishing constraints
indexed by \( \IndexSet_{0-} \). In this case, the controlling constraints in \( \IndexSet_{0-} \) are treated as equality constraints. 
This observation aligns with the structure of the linearized cone presented in \eqref{eq:linearized_cone}.
More precisely, the vanishing constraints \( G_i \), respectively the slack variables $s_{\idxVC{i}}$, 
for \( i \in \IndexSet_{0-} \) have vanished from the linearized cone, as they do not contribute to feasibility 
or optimality conditions at a local minimum. This is also reflected in the zero Lagrange multipliers for 
the vanishing constraints in \(\IndexSet_{0-} \) at a strong stationary point. Consequently, these vanished 
constraints, i.e.,  the constraints in the lower branch $C_0$ can be omitted in the formulation of the subsequent NLP.
A similar idea can be found in \cite{kirches_sqp}. Let $\signature$ be a signature for the branch decomposition. 
We define the reduced slack variables 
\begin{equation*}
  \tilde{s} \define (s_{\idxCC{1}},\ldots,s_{\idxCC{i}}, (s_{\idxVC{i}},\;\text{for}\;\signature_i = 1))^T,
\end{equation*}
with 
\begin{equation*}
  C^\mathrm{r}(\signature) \define C^\mathrm{s} \cap \left(\bigtimes_{i = 1}^{l}
\begin{cases}
  C_{+}(i) & \text{ if } \signature_{i} = 1 \\
  \{s_{\idxCC{i}} = 0\} & \text{ if } \signature_{i} = 0.
\end{cases} \right)
\end{equation*}
and respective real Hilbert space $\tilde{S}$ with its induced norm. The reduced constraints are
\begin{equation*}
  \tilde{c}(x,\tilde{s}) \define
  \begin{pmatrix*}[l]
      \multicolumn{2}{c}{g(x)} \\
      H_i(x)-s_{\idxCC{i}} & \text{for } i=1,\ldots,l\\
      G_i(x)-s_{\idxVC{i}} & \text{for } \signature_i=1
    \end{pmatrix*} \in \Real^r
\end{equation*}
with $r = m+l+\#\{i\,\mid\,\signature_i = 1\}$. We consider the following reduced problem:
\begin{equation}\label{eq: Red-NLP}
  \tag{Red-NLP}
  \begin{aligned}
      \quad\quad \quad\min_{\mathclap{x \in C, \tilde{s}\in C^\mathrm{r}(\signature), \tilde{w} \in \Real^r}} &  \quad\quad\quad \tilde{f}^{\rho}(x,\tilde{s}) 
    + \lambda \left[ \tfrac{1}{2} \primalNorm{x - \hat{x}}^{2} 
        + \tfrac{1}{2} \|\tilde{s} - \tilde{\hat{s}}\|_{\tilde{S}} 
        + \tfrac{1}{2} \dualNorm{\tilde{w}- \tilde{\hat{y}}}^2 
    \right] \\
    \text{s.t.} & \quad \tilde{c}(\tilde{x}) + \lambda \tilde{w} = 0,
  \end{aligned}
\end{equation}
where \( \tilde{w} \) is a vector of the same dimension as \( \tilde{c} \), 
\( \tilde{f}^{\rho}(x,\tilde{s}) \) denotes the reduced augmented objective, 
and \( \tilde{\hat{x}} \), \(\tilde{\hat{s}}\), and \( \tilde{\hat{y}} \) are 
the corresponding reduced solutions from the previous iteration.

Following the solution of \eqref{eq: Red-NLP}, the slack variables \( s_{\idxVC{i}} \) for \( \signature_i = 0 \) 
are updated such that they satisfy  $G_i(x)-s_{\idxVC{i}}=0$ and projected afterwards onto \( C^\mathrm{s}\cap C_0(i) \) for $\signature_i=0$
to detect potential bi-active points and possible switches. 
The new value for $y$ is given by $\tilde{\hat{y}} - \tilde{w}$ and for the missing values of the VC in the lower branch with zero.
We go on to prove the correctness of this modification:
\begin{theorem}
	A strong stationary point of \eqref{eq:mpvcs} is equivalent to an equilibrium point
  under the proposed strategy based on piecewise flows.
\end{theorem}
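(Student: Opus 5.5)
We prove the two implications separately. Throughout, an \emph{equilibrium point under the proposed strategy} is a primal-dual triple $(x,s,y)$ with signature $\signature$ such that three things hold. First, the reduced projected flow for \eqref{eq: Red-NLP} is stationary, i.e.\ $P_{T(C\times C^\mathrm{r}(\signature),(x,\tilde s))}(-\nabla_{x,\tilde s}\Lag^{\rho})=0$ and $\nabla_{\tilde y}\Lag^{\rho}=0$. Second, the reconstructed lower-branch slacks $s_{\idxVC{i}}=G_i(x)$ (projected onto $C^\mathrm{s}\cap C_0(i)$) are consistent. Third, none of the switching rules fires at any $i\in\IndexSet_{00}$. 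We use three facts from \cite{sequential_homotopy}: equilibria of the gradient / anti-gradient flow are exactly the stationary points of the underlying NLP; the Lagrange multipliers of the slack equality constraints in \eqref{eq:mpvcs} coincide with $\eta_H,\eta_G$ of Theorem~\ref{thm:mpvc_kkt}; and the stationarity test is formulated through $T^{-}(C\times C^\mathrm{s},(x,s))$.

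\textbf{Step 1 (reduction is exact).} We first show that an equilibrium of the reduced flow, padded with $y$-entries equal to zero for the dropped constraints $G_i-s_{\idxVC{i}}$ with $\signature_i=0$, is an equilibrium of the full flow on $C\times C^\mathrm{b}(\signature)$. Since $\nabla_y\Lag^\rho=0$ gives $g(x)=0$ and $H_i(x)=s_{\idxCC{i}}$, the augmented term vanishes at equilibrium, so $\nabla\Lag^\rho=\nabla\Lag$. For $\signature_i=0$ the dropped multiplier is zero, so these constraints contribute nothing to $\nabla_x\Lag$. The component $(\nabla_s\Lag)_{\idxVC{i}}$ is then $0$, which lies in the normal cone of $C_0(i)$ in that coordinate. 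Finally, the reset $s_{\idxVC{i}}=G_i(x)$ makes the dropped constraint feasible; here we must argue that the projection onto $C_0(i)$ does not alter $s_{\idxVC{i}}$ at an equilibrium, i.e.\ that $G_i(x)\le 0$ when $\signature_i=0$ in the stationary case. Conversely, a full equilibrium with $\signature_i=0$ has $(\nabla_s\Lag)_{\idxVC{i}}=0$ whenever $s_{\idxVC{i}}<0$, so its multiplier is zero and it restricts to a reduced equilibrium.

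\textbf{Step 2 (strong stationary $\Rightarrow$ equilibrium).} Take multipliers as in Theorem~\ref{thm:mpvc_kkt} and choose $\signature_i=1$ for $i\in\IndexSet_+$, $\signature_i=0$ for $i\in\IndexSet_{0-}$, and either value for $i\in\IndexSet_{0+}\cup\IndexSet_{00}$, with $\signature_i=1$ on $\IndexSet_{0+}$. We then verify coordinate by coordinate that $-\nabla_{x,s}\Lag\in T^{-}(C\times C^\mathrm{b}(\signature),(x,s))$, which is the case analysis of the bullet list preceding the ``Piecewise Gradient Flows'' paragraph. For $i\in\IndexSet_{00}$, strong stationarity forces $(\eta_G)_i=0$, hence $(-\nabla_s\Lag)_{\idxVC{i}}=0$, and $(\eta_H)_i\le0$ gives $(-\nabla_s\Lag)_{\idxCC{i}}\le 0$. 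Checking the switching rules with these values: in the upper branch the rule needs a strict negative in the $\idxVC{i}$ entry, so it does not fire; in the lower branch it needs a strict positive in the $\idxVC{i}$ entry, or zero there together with a strict positive in the $\idxCC{i}$ entry, so it does not fire either. No switch occurs, and by Step 1 and \cite{sequential_homotopy} the point is an equilibrium.

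\textbf{Step 3 (equilibrium $\Rightarrow$ strong stationary).} Given an equilibrium, we read off the multipliers $\eta_H,\eta_G$ from the slack components of $y$. The non-biactive indices follow from the first part of the case analysis, where the residuals coincide. The \textbf{main obstacle} is $i\in\IndexSet_{00}$, where stationarity in the branch only yields sign conditions but strong stationarity requires $(\eta_G)_i=0$. Here we exploit that no switch fires. In the upper branch, stationarity gives both gradient entries $\le0$, and the absence of a switch excludes the case where the $\idxCC{i}$ entry is $\le0$ and the $\idxVC{i}$ entry is $<0$; hence the $\idxVC{i}$ entry equals $0$. In the lower branch, the $\idxVC{i}$ entry is $\ge0$ by stationarity and not $>0$ by the absence of a switch, so it equals $0$, and then the $\idxCC{i}$ entry is $\le 0$. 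In both cases we obtain $(\eta_G)_i=0$ and $(\eta_H)_i\le0$, which are exactly the $\IndexSet_{00}$ requirements of Theorem~\ref{thm:mpvc_kkt}. It remains to confirm that the projected residual $g_{\MPVCS}-P_{T^{-}}(g_{\MPVCS})$ vanishes, i.e.\ that the stopping test agrees with the no-switch condition, which completes the equivalence.
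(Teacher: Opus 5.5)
Your proposal is correct and follows the same route as the paper's proof. The forward direction fixes a signature compatible with $s^*$ and uses feasibility to replace $\nabla\Lag^{\rho}$ by $\nabla\Lag$. The converse is the same case analysis over $\signature_i$: the reduced strategy forces the lower-branch VC multipliers to zero, and the absence of a switch pins down the bi-active multipliers.

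Your Step 1 and the explicit no-switch check in Step 2 only make precise what the paper leaves implicit. Both loose ends you flag close directly: the first by your consistency condition (the paper likewise assumes $\dot y = 0$ in every component, including $G_i(x)-s_{\idxVC{i}}$ for $\signature_i = 0$), and the second because the residual vanishes once the sign conditions are established.
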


\begin{proof}
Consider to the vertical formulation given in~\eqref{eq:mpvcs}. Subsequent NLPs of the piecewise flow are solved with 
\eqref{eq: Red-NLP} and the described updating strategy. 
Let $(x^*,s^*, y^*)$ be a strong stationary point of~\eqref{eq:mpvcs}. We have 
\begin{equation*}
  \begin{aligned}
    -\nabla_{x,s} \Lag_{\MPVCS} (x^*,s^*, y^*) &\in T^{-}(C\times C^\mathrm{s}, (x^{*},s^{*})), \\
    g(x^*) &= 0, \\
    H_i(x^*) - s^{*}_{\idxCC{i}} &= 0,\quad \text{for } i=1,\ldots,l  \\
    G_i(x^*) - s^{*}_{\idxVC{i}} &= 0,\quad \text{for } i=1,\ldots,l.
  \end{aligned}
\end{equation*} We get a signature $\signature$ as described in the branch initialization section from vanishing and controlling constraints and their slack variables.
It follows $s^* \in C^\mathrm{b}(\signature) \subseteq C^\mathrm{s}$ and 
since $-\nabla_{x,s} \Lag_{\MPVCS}^{\rho} (x^*,s^*, y^*) = -\nabla_{x,s} \Lag_{\MPVCS} (x^*,s^*, y^*)$ we get
\begin{equation*}
  \begin{aligned}
    0 &= P_{T(C\times C^\mathrm{b}(\signature), (x*,s*))} \left( -\nabla_{x,s} \Lag^{\rho} (x*, s*, y*) \right) = (\dot{x}(t),\dot{s}(t)),\\
    0 &= \nabla_{y} \Lag_{\MPVCS} (x^*,s^*, y^*) = \nabla_{y} \Lag_{\MPVCS}^{\rho} (x^*,s^*, y^*) = \dot{y}(t).    
  \end{aligned}
\end{equation*} 

Conversely, assume an equilibrium point $(x^*,s^*, y^*)$, i.e., \( \dot{x}(t) = 0, \dot{s}(t)=0 \) and \( \dot{y}(t) = 0 \). Then,
\begin{align*}
	0 = \dot{y}(t) = 
	\begin{pmatrix*}[l]
		\multicolumn{2}{c}{g(x^*)}\\
    H_i(x^*) - s^{*}_{\idxCC{i}} = 0 & \text{for } i=1,\ldots,l  \\
    G_i(x^*) - s^{*}_{\idxVC{i}} = 0 & \text{for } i=1,\ldots,l.
	\end{pmatrix*}
\end{align*}
and therefore $\nabla_{x,s} \Lag_{\MPVCS}^{\rho} (x^*,y^*, y^*) = \nabla_{x,s} \Lag_{\MPVCS}(x^*,s^*, y^*)$.
From \( \dot{x}(t) = 0 \), we obtain \( -\nabla_{x,s} \Lag_{\MPVCS} (x^*,s^*, y^*) \in T^{-}(C\times C^\mathrm{b}(\signature), (x^{*},s^{*}))\) 
with signature $\signature$, which was used to solve the last~\eqref{eq: Red-NLP}. By construction we have $C^\mathrm{b}(\signature)\subseteq C^\mathrm{s}$.
We check the conditions on the Lagrange multipliers for satisfying strong stationarity. Depending on $\signature_i$ for $i=1,\dots,l$ we have the following cases:\begin{itemize}
  \item If $\signature_i = 0$ and $s_{\idxVC{i}} <0 $ we have $i\in\IndexSet_{0-}$. By our strategy 
  the respective Lagrange multiplier for $s_{\idxVC{i}}$ is zero and for the slack of the controlling constraint $s_{\idxCC{i}}$ free, 
  since the controlling constraint is treated as an equality constraint. 
  \item If $\signature_i = 0$ and $s_{\idxVC{i}}=0$, we have $i\in\IndexSet_{00}$. Again the respective Lagrange multiplier for $s_{\idxVC{i}}$ is zero, 
  but for the slack of the controlling constraint $s_{\idxCC{i}}$ is less than or equal to zero. If the Lagrange multiplier for $s_{\idxCC{i}}$ would be positive we
  would not terminate, since the controlling constraint becomes inactive and a switch must performed to the upper branch.
  \item If $\signature_i = 1$ and $s_{\idxCC{i}}=s_{\idxVC{i}}=0$, we have $i\in\IndexSet_{00}$. 
  The Lagrange multiplier for the slack of the vanishing constraint is zero and for the 
  slack of the controlling constraint $s_{\idxCC{i}}$ is less than or equal to zero, 
  otherwise we would not terminate at the biactive point. 
  \item The remaining index sets are $\IndexSet_{0+},\,\IndexSet_{+0}$, and $\IndexSet_{++}$, which are covered by $\signature_i=1$. 
  The Lagrange multipliers match the strong stationarity conditions. 
\end{itemize}
It follows that \( -\nabla_{x,s} \Lag_{\MPVCS} (x^*,s^*, y^*) \in T^{-}(C\times C^\mathrm{s}, (x^{*},s^{*}))\).
\end{proof}

\section{Numerical Experiments}\label{sec:num_ex}

In the following we will demonstrate the effectiveness of our approach
on several test instances of MPVCs. While large suites of MPEC
benchmark problems, such as \texttt{NOSBENCH}~\cite{solving_mpccs} and
\texttt{MacMPEC}~\cite{MacMPEC} are available, there are no larger suites of MPVC benchmark problems, but several problems are
available in~\cite[Sec. 9.5]{diss_hoheisel}, which we adapt for our purposes. 
We convert all of these problems to their vertical form and initialize our methods 
at $x_{0} = 0$,$ y_{0} = 0$ unless stated otherwise.
All experiments were conducted on an \texttt{Intel Core i7-14700} clocked
at \SI{2.1}{\giga\hertz} with \SI{64}{\giga\byte} RAM based on an implementation in \texttt{Python 3.13.5} with \texttt{CasADi 3.7.1}~\cite{casadi} and
using \texttt{Ipopt 3.14.11}~\cite{ipopt} as subproblem solver for solving \eqref{eq: Red-NLP}.

\subsection{An Academic Example}
We begin by examining the following small academic example in two variables:
\begin{equation*}
  \begin{aligned}
    \min_{x \in \Real^{2}} \quad & 4x_{1} + 2x_{2} \\
    \st \quad &  x_{1} \geq 0 \\
                                 & x_{2} \geq 0 \\
    & \left( x_{1} + x_{2} - 5 \sqrt{2}  \right) x_{1} \geq 0 \\
    & \left( x_{1} + x_{2} - 5 \right) x_{2} \geq 0
  \end{aligned}
\end{equation*}
This example has the advantage of being low-dimensional and therefore
easy to reason about. As shown in Figure~\ref{fig:num_ex:academic_flow}, the problem
consists of a polyhedron with an attached line segment as well as a
single isolated point. It has a local optimum at ${\tilde{x} \define (0, 5)}$ at
the tip of the line segment as well as a global optimum at ${x^{*} \define (0, 0)}$ in
the isolated point. \texttt{Ipopt} for example finds with a suitable initial guess one of these points, 
but usually converging to the point ${\hat{x} \define (5\sqrt{2},0)}$, a corner of the 
polyhedron where the second controlling constraint turns active.
Numerical results of our method are shown in Figure \ref{fig:num_ex:academic}.
We visualize a flow in Figure \ref{fig:num_ex:academic_flow} starting with $\lambda=100$ and 
slightly decreasing by a factor of $1.01$. The flow consists of 637 points computed with 4602 \texttt{Ipopt} iterations.
Note, that we use $\lambda_\mathrm{init}=0.1$ and a decreasing factor of 
$\lambda_\mathrm{inc}=2.1$ from now on. This results in a convergence in 
$6$ iterations with $37$ \texttt{Ipopt} iterations of the academic example. 
Due to the slack variable reformulation a mismatch of the constraints 
during the solution process depending on the starting point can be observed.
Furthermore our method converges from every starting point inside the test 
region $[\num{-3.75},\num{11.75}]\times[\num{-3.75},\num{11.75}]$ to a 
minimum (see Figure \ref{fig:num_ex:academic_regions}).
The results are comparable to the global relaxation approach in e.g. \cite{Hoheisel22}. 
Due to the branch initialization process, the regions of convergence
differ from the results in \cite{Hoheisel22}.
If we put every infeasible pair of controlling and vanishing constraint in the lower branch $C_0$, 
the region of convergence of the global minimum would expand close to 
the local minimum, but not reaching the feasible set beside the 
isolated global minimum (see Figure \ref{fig:num_ex:academic_regions_2}).
\begin{figure}[ht]
        \centering\includegraphics[width=0.4\textwidth]{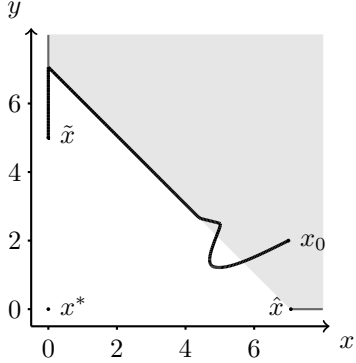}
        \caption{One example flow for the starting point (7,2) is shown in black. 
         The feasible set is marked in grey with active controlling constraints in dark grey, showing the nonconvex structure.
         The global optimal point is $x^*$, the local optimal point is $\tilde{x}$, and a further convergence point for standard solver is $\hat{x}$.}   
    \label{fig:num_ex:academic_flow}
\end{figure}
\begin{figure}[ht]
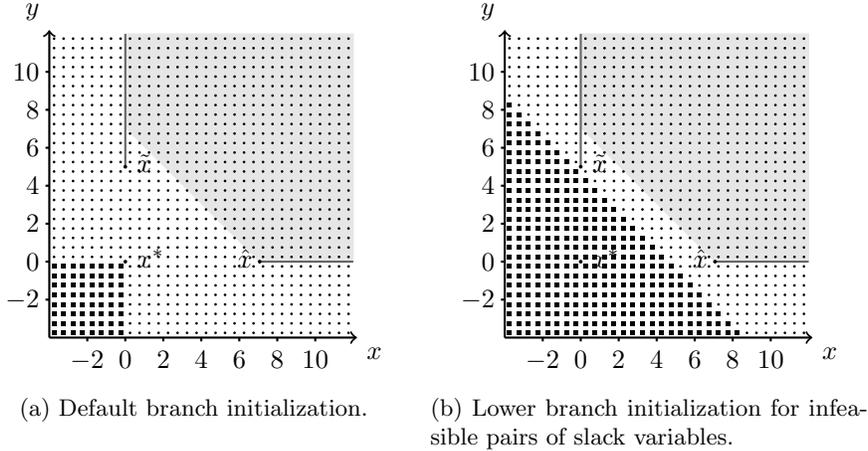

    \begin{subfigure}[t]{0.48\textwidth}
        \centering\includegraphics[width=0.9\textwidth]{academic_regions}
        \caption{Default branch initialization.}
        \label{fig:num_ex:academic_regions}
    \end{subfigure}~
    \begin{subfigure}[t]{0.48\textwidth}
        \centering\includegraphics[width=0.9\textwidth]{academic_regions_2}
        \caption{Lower branch initialization for infeasible pairs of slack variables.}
        \label{fig:num_ex:academic_regions_2}
    \end{subfigure}
    \caption{Numerical results for the academic example. The feasible set is marked in grey with active controlling constraints in dark grey.
    The global optimal point is $x^*$, the local optimal point is $\tilde{x}$, and a further convergence point for standard solver is $\hat{x}$.
    Starting points where our method converges to the local optimum is shown with circles. Rectangles mark the convergence of our method to the global minimum.
    Figure (a) shows the convergence regions with the prescribed branch initialization strategy.
    Figure (b) shows the convergence regions, if every infeasible pair of slack variables is initialized in the lower branch.}
    \label{fig:num_ex:academic}
\end{figure}
%

%
\subsection{Truss Topology Optimization}
Truss topology optimization problems appear in civil engineering
application, where structures are supposed to be designed in such a
way as to resist certain external loads without buckling while being
attached at certain points and free to flex under loads at others
(see~\cite{topology_optimization} for an introduction). Examples of
such structures are bridges designed to handle the load of
passing vehicles or cantilevers fixed at either end while having a
load attached at the other. Truss design problems can be stated as
\begin{equation*}
  \begin{aligned}
      \min_{a \in \Real^{N}, u \in \Real^{Ld}} \quad & \sum_{i = 1}^{N} \ell_{i} a _i \\
    \st \quad & K(a) u_{l} = f_{l} & \quad \text{for all } l = 1, \ldots , L \\
                                                     & f_{l}^{T} u_{l} \leq c & \quad \text{for all } l = 1, \ldots , L \\
                                                     & 0 \leq a_{i} \leq \overline{a} & \quad \text{for all } i = 1, \ldots , N \\
                                                     & \left( \overline{\sigma}^{2} - \sigma_{i l}(a, u)^{2} \right) a_{i} \geq 0 & \quad \text{for all } i = 1, \ldots , N, l = 1, \ldots , L. \\
  \end{aligned}
\end{equation*}
Here, the variables $a_{i}$ denote the cross-sectional area of a set
of $N > 0$ potential bars (which are actually present only when
$a_{i} > 0$) connecting a number of points in 2-dimensional space,
where each bar has a fixed length of $\ell_{i} > 0$.  Naturally, the area
is bounded below by zero and above by some constant
$\overline{a} > 0$. The objective of the problem is to reduce the
amount of material required by the truss, which is proportional to the
sum of volumes over all bars.

As mentioned above, the designed structure is supposed to resist
external loads. Specifically, there are $L \in \Nat$ load cases,
during each of which a load exerts forces on a subset of the
points. The points themselves fall into two categories: They are
either fixed points at which the structure is attached, or they are
free points at which a load case may cause a non-zero
displacement. The displacements of the latter points under different
load cases are contained in the variables
$u = (u_{1}, \ldots, u_{L})$, where we let $d \in \Nat$ be the total
dimension of each $u_{l}$, given by the product of the number of free
points and the ambient dimension of $2$. The matrix
\begin{equation*}
  K(a) \define \sum_{i = 1}^{N} a_{i} \frac{E}{\ell_{i}} \gamma_{i} \gamma_{i}^{T}
\end{equation*}
is the global stiffness matrix of the problem with a fixed value $E$
for Young's modulus and vectors $\gamma_{i} \in \Real^{d}$ containing
the cosines of the angles between the displacement coordinate and bar
axes. The equations $K(a) u_{l} = f_{l}$ then model equilibrium
conditions, where each vector $f_{l} \in \Real^{d}$ contains the
external loading force applied to the free points, while the
inequalities $f_{l}^{T} u_{l} \leq c$ bound the compliance of
the structure by another constant $c > 0$.  Finally, the structure
must obey the \emph{stress constraint}, stating that the absolute
value of the stress
\begin{equation*}
  \sigma_{i l}(a, u) \define E \frac{\gamma_{i}^{T} u_{l}}{\ell_i}
\end{equation*}
should be bounded above by a constant $\overline{\sigma} > 0$. This
condition is squared to remove the absolute value. The constraint is
however only required when a bar is actually present (\ie $a_{i} > 0$)
and redundant otherwise. Hence, it is formulated as a vanishing
constraint controlled by the value of $a_{i}$. We would
like to point out all of our instances are normalized
by setting $E \equiv 1$ and $\|f_{\ell}\|_{2} = 1$.
Problems are listed in Table~\ref{tab:num_ex:problemslist} 
with their solutions in Table~\ref{tab:num_ex:problemssol}.

\begin{table}[hb]
  \caption{List of problem instances with sizes and parameters. Note that the number of VCs corresponds to the number of bars.}
  \label{tab:num_ex:problemslist}
  \centering\begin{tabular}{l S[table-format=4] S[table-format=3] S[table-format=3] S[table-format=3]  S[table-format=3] S[table-format=3.1]}
     \toprule
     \textbf{Problem} & \textbf{$\#$Vars} & \textbf{$\#$Cons} &\textbf{$\#$VC}  & $\bar{a}$ & $c$ & $\bar{\sigma}$ \\
     \midrule
    \instName{TenBar}&   29 &  19 &  10 & 100 &  10 &   1.0\\
    \instName{Cant1} &  497 & 273 & 224 &   1 & 100 & 100.0\\
    \instName{Cant2} &  497 & 273 & 224 &   1 & 100 &   2.2\\
    \instName{Hook1} & 1417 & 756 & 661 & 100 & 100 & 100.0\\
    \instName{Hook2} & 1417 & 756 & 661 & 100 & 100 &   3.5\\
    \instName{Hook3} & 1417 & 756 & 661 & 100 & 100 &   3.0\\
    \bottomrule
  \end{tabular}
\end{table}

\Paragraph{Initialization} Due to the large number of vanishing
constraints being present in truss topology problems, an
initialization of zero places the initial point of the flow in a
position where all vanishing constraints are bi-active while being far
away from stationarity, causing the flow to enter a branch constrained
by many variables fixed to zero. We find it to be more advantageous to
instead start the optimization at a point bounded away from zero in
the upper branch of the problem. To this end, we set
$a_i \equiv \alpha$ for all $i$ with a constant $\alpha > 0$. Based on
this choice of $\alpha$, we want $u_l$ to be close to
$\argmin_{u \in \Real^{d}}\|K(a) u - f_{l}\|$ in order to reduce the
residual of the corresponding constraint. In our examples, $K(1)$ is
always invertible and we can choose $u_{l} \define \alpha^{-1} K(1)^{-1} f_{l}$.
To ensure that the compliance bound is met, we need that
$\alpha \geq c^{-1} f_{l}^{T}K(1)^{-1}f_{l}$ for all load cases.
Similarly, for the stress constraint, we need that
\begin{equation*}
  \frac{\gamma_i^{T} K(1)^{-1} f_{l}}{\ell_{i} \overline{\sigma}} \leq \alpha
  \quad \text{ for all } i = 1, \ldots, N, l = 1, \ldots, L.
\end{equation*}
By taking the maximum of all these bounds we obtain an $\alpha$ such
that the resulting values of $a$ and $u$ yield a solution in the upper branch, 
which is feasible under the mild assumption that the resulting values of $a$ are 
all bounded above by $\overline{a}$.

To compute a dual estimation based on a given primal for a given
instance of~\eqref{eq:mpvcs}, we conduct the following steps: First,
we compute the signature $\signature$ compatible with the primal
solution. Second, we consider the corresponding
problem~\eqref{eq:branch_nlp} and compute a dual solution minimizing its
KKT residuum $\|f(x) + (y, y_{s})^{T} J_{g}(x, s) \|_{2}$ while
maintaining bounds on $y_{s}$ compatible with the active set, \ie
forcing entries of $y_{s}$ to be equal to zero for inactive variables
and to be non-negative or non-positive, depending on the active
bounds, otherwise. The resulting box-constrained quadratic program can
be quickly solved using standard methods, yielding a reasonable initial
dual solution $y$.

\Paragraph{Algorithmic settings}
The parameters $\rho=10^{-2}$, $\lambda_\mathrm{init}=0.1$, $\lambda_\mathrm{inc}=2.1$ 
were selected for the calculation of all results. 
Unless otherwise specified, the variables are scaled before using \eqref{eq: Red-NLP}, 
so that all primal and dual variables with an absolute value greater than $1$ are scaled 
to unity in every iteration and the scalar products are not scaled.

\subsubsection{Ten-bar Truss}
As the first truss design problem we consider the so-called
\emph{ten-bar} problem from~\cite{diss_hoheisel} shown
in Figure~\ref{fig:num_ex:tenbar_potential}. It consists of
ten potential bars to be placed between six points, two
of which being fixed to a wall and a loading force
pointing downwards. We set the parameters to $(\bar{a}, c, \bar{\sigma}) = (\num{100.0}, \num{10.0}, \num{1.0})$.

Based on the primal-dual initialization
given above, our method converges in $7$ iterations with $181$ \texttt{Ipopt} iterations within less than a second with an
optimal volume $V^{*} = 8.0$. The resulting solution is
shown in Figure~\ref{fig:num_ex:tenbar_solution} with individual solution
values being shown in Table~\ref{tab:num_ex:tenbar}.
Note that there is no active compliance constraint, but several feasible stress constraints $\sigma_i^2\leq\bar{\sigma}^2$, 
which differs from the solution structure seen in \cite{diss_hoheisel}, 
while the optimal volume of our solution does coincide with that in \cite{Hoheisel22}.
The value of the compliance constraint and the values of the $a_i$ (with reordering) coincide with the results in \cite{benko_sqp}.
\begin{figure}[ht]
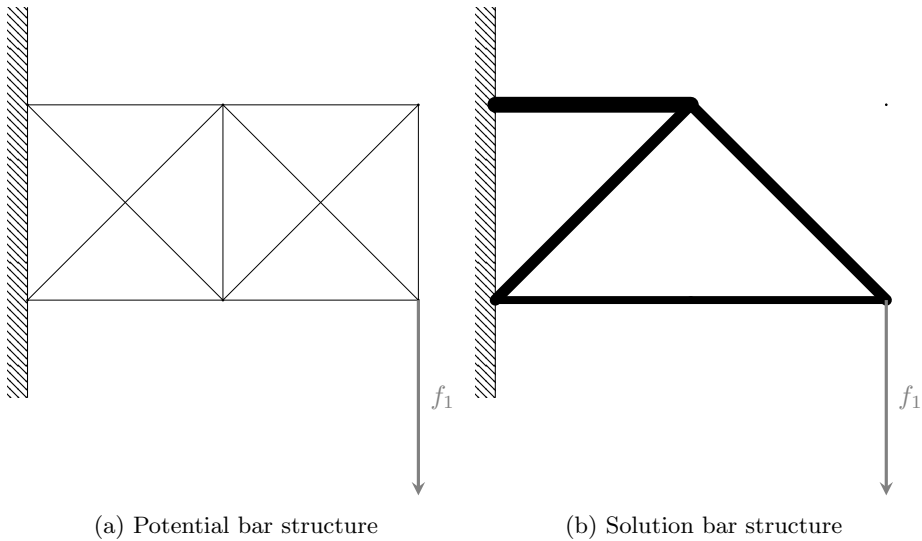

    \begin{subfigure}[t]{0.5\textwidth}
        \includegraphics[width=\textwidth]{tenbar_potential}
        \caption{Potential bar structure}
        \label{fig:num_ex:tenbar_potential}
      \end{subfigure}~
    \begin{subfigure}[t]{0.5\textwidth}
        \includegraphics[width=\textwidth]{tenbar}
        \caption{Solution bar structure}
        \label{fig:num_ex:tenbar_solution}
      \end{subfigure}
    \caption{Potential bars and solution of the \instName{TenBar} instance, which visually coincide with e.g. \cite{diss_hoheisel,Hoheisel22}.
    The load force $f_1$ is marked in dark grey. On the left the structure is fixed on the grey wall.}
    \label{fig:num_ex:tenbar}
\end{figure}
\begin{table}
  \caption{Results of the \instName{TenBar} instance for the controlling constraints $a_i$, the stress values $\sigma_i$, the displacement variables $u_j$.
  We also show the compliance $f_1^Tu$ and optimized Volume $V^*$}
  \label{tab:num_ex:tenbar}
  \centering\begin{tabular}{S[table-format=2] S[table-format=1.6] S[table-format=1.6] | S[table-format=2] S[table-format=1.6]}
     \toprule
     {$i$} & {$a_i^*$} & {$\sigma_i(a^*,u^*)$} & {$j$} & {$ u_j^*$}\\
     \midrule
     1 & 1.00000 & -1.00000 &1& -1.00000\\
     2 & 1.41421 & -1.00000 &2& -2.72241\\
     3 & 2.00000 &  1.00000 &3&  1.00000\\
     4 & 0       &  0.86120 &4& -3.00000\\
     5 & 0       & -0.27758 &5& -2.00000\\
     6 & 1.00000 & -1.00000 &6& -8.00000\\
     7 & 0       & -0.91333 &7&  1.47111\\
     8 & 0       &  0.47111 &8& -7.02019\\
     9 & 1.41421 &  1.00000 &\multicolumn{2}{c}{$f^Tu = \num{8.00000} $} \\
     10 & 0      &  0.97980 &\multicolumn{2}{c}{$V^* = \num{8.00000}$} \\
     \bottomrule
  \end{tabular}
\end{table}

\FloatBarrier

\subsubsection{Cantilever Problem}

The next, more challenging truss design problem consists of the design
of a cantilever arm. The instances, shown in
Figure~\ref{fig:num_ex:cantilever_potential}, consist of \num{27} points
placed in three rows. Once again, the leftmost points are attached
to a wall, while a loading force pointing downwards is applied
to the rightmost node on the bottom. Potential bars are present
between all pairs of points where long bars overlapping shorter
ones are removed. To this end, we let $(x_{i}, y_{i}) \in \Nat^{2}$ be (integer) coordinates of the points, noting that the bar between points $i$ and $j$
is overlapping if $\gcd(x_i - x_j, y_i - y_j)\neq 1$. Consequently,
potential bars are present for \num{224} of the \num{351} pairs of points.
Following \cite{diss_hoheisel}, We consider three parameter sets $\bar{a} = 100,~c=1.0,~\bar{\sigma}\in\{2.2,100\}$, yielding the instances \instName{Cant1} and
\instName{Cant2}.

The solutions, depicted in Table~\ref{tab:num_ex:problemssol} are
similar to those of the previous instance, where convergence is
achieved within less than $20$ iterations and $1287$ to $3228$ \texttt{Ipopt} Iterations. The computation
time however differs widely between the instances, depending on the choice of scaling.  As can be seen from the final
configuration, several vanishing constraints become active in
\instName{Cant2a} and \instName{Cant2b}, and several branches have to be explored until a
solution is found, increasing the number of iterations, but not necessarily run times. 
Conversely, for \instName{Cant1} no VC is in the lower branches at
the solution, coinciding with the results in~\cite{diss_hoheisel}.

In Figure~\ref{fig:num_ex:cantilever2} we also present two solutions for instance \instName{Cant2} in order to show that small differences in the method
can lead to a different local solution. We therefore divide the 
instance \instName{Cant2} into two subinstances. In instance \instName{Cant2a}, we use the algorithmic parameters as they are. 
In instance \instName{Cant2b}, the scaling of the variables is turned off and, at the same time, 
the scalar products for the primal variables and slack variables as described in \eqref{eq:scaled_nlp} are changed from the Euclidean scalar products
to the scalar products with absolute values of the primal Lagrange gradient of \eqref{eq:mpvcs} on the diagonal and added value of $\rho$.
The solutions of \instName{Cant2a} and \instName{Cant2b} differ and is a typical behaviour of vanishing constraint
problems caused by their non-convex feasible set. 
Although fewer bars are required for the truss structure, the volume of $30.63$ in \instName{Cant2b} is higher than the volume of $23.9741$ in \instName{Cant2a}. 
At the same time, more \texttt{Ipopt} iterations are required, which leads to a longer runtime for instance \instName{Cant2b}.
The results are noted in Table \ref{tab:num_ex:problemssol}.

\begin{figure}[ht]
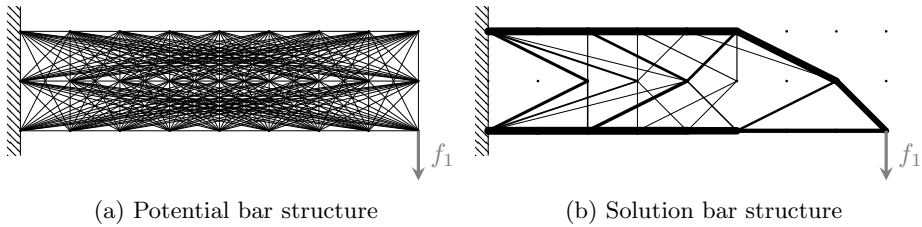

    \begin{subfigure}[t]{0.5\textwidth}
        \includegraphics[width=\textwidth]{cantilever_potential}
        \caption{Potential bar structure}
        \label{fig:num_ex:cantilever_potential}
      \end{subfigure}~
    \begin{subfigure}[t]{0.5\textwidth}
        \includegraphics[width=\textwidth]{cantilever_1}
        \caption{Solution bar structure}
        \label{fig:num_ex:cantilever_solution_1}
      \end{subfigure}
      \caption{Potential structure of the cantilever problem in (a) and the solution of the \instName{Cant1} instance in (b). 
      The load force $f_1$ is marked in dark grey. The structure is fixed to the wall on the left-hand side.}
    \label{fig:num_ex:cantilever}
\end{figure}
\begin{figure}[ht]
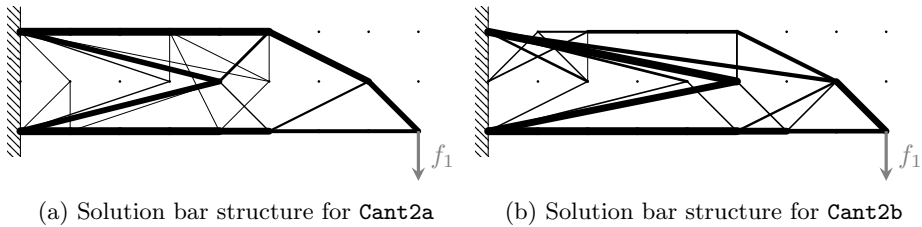

    \begin{subfigure}[t]{0.5\textwidth}
        \includegraphics[width=\textwidth]{cantilever_2}
        \caption{Solution bar structure for \instName{Cant2a}}
        \label{fig:num_ex:cantilever_solution_2}
      \end{subfigure}~
    \begin{subfigure}[t]{0.5\textwidth}
        \includegraphics[width=\textwidth]{cantilever_22}
        \caption{Solution bar structure for \instName{Cant2b}}
        \label{fig:num_ex:cantilever_solution_22}
      \end{subfigure}
      \caption{Influence of scaling on instance \instName{Cant2}:
       Scaling of variables in (a) and not in (b), without scalar product scaling in (a)
       and primal and slack variable scalar product scaling based on the Lagrange gradient and parameter $\rho$ in (b).}
    \label{fig:num_ex:cantilever2}
\end{figure}

\FloatBarrier
\subsubsection{Hook Problem}

The final problem, again described in \cite{diss_hoheisel}, is called
the \emph{hook problem} owing to the shape of the structure as
depicted in Figure~\ref{fig:num_ex:hook_potential} consisting of
\num{35} nodes. The structure is attached at its top with a load being
applied to a node at its far right.  All nodes once more have integer
coordinates, enabling us to detect overlapping potential bars as
before, resulting in a total of~\num{661} potential bars.
We consider three parameter sets $\bar{a} = 100,~c=100,~\bar{\sigma}\in\{3,3.5,100\}$.

The detailed solutions are listed in Table~\ref{tab:num_ex:problemssol} and visualized in Figure~\ref{fig:num_ex:hook}. 
In \instName{Hook1}, the first parameter set with $\bar{\sigma}=100$ is used. It can be observed that, as in \cite{diss_hoheisel},
the stress constraint is not active. The structure forms a half-circle open at the top (see Figure~\ref{fig:num_ex:hook_solution_1}).
With 21 bars, there are significantly fewer bars than in \cite{diss_hoheisel}, which can be related to the single load case. 

The problem instance \instName{Hook2} attains the maximal stress of $\bar{\sigma}=3.5$, which turns active during the 
optimization but not in the solution, leading to the
same structure with the same volume of $9.1118$. These results of \instName{Hook1} and \instName{Hook2} 
illustrate the influence of the parameter $\bar{\sigma}=3.5$, as reflected in the shorter run time, 
different maximal stress, and iteration numbers. In \instName{Hook3} we further decrease the $\bar{\sigma}$ to $3.0$.
Again, the solution bar structure consists of $21$ bars showing the same structure as \instName{Hook1}, but, the maximal stress value for 
solution bars $\hat{\sigma}_{\max}=3.0$ is active. The volume has increased to $10.0691$. The number of vanished constraints again increases 
with the decreasing of $\bar{\sigma}$, i.e., we have constraints in the lower branch $C_0$. 
The shorter run time is surprising and is a result of the lower number of iterations of both our approach 
and the subproblem solver \texttt{Ipopt}.

Lastly, owing to the larger problem sizes, the computation time increases significantly approximately one to three hours 
for the instances compared to the cantilever instances. 
However, the number of iterations of our approach remains roughly small. 
Thus, progress towards convergence is impeded by the increase in time required to solve the individual subproblems.
\begin{table}[ht]
  \caption{Detailed solutions of listed problem from Table~\ref{tab:num_ex:problemslist} showing the optimal volume $V^*$, the numbers of resulting bars,
    the maximal stress values $\sigma_{\max}$ and the maximal stress value for solution bars $\hat{\sigma}_{\max}$ (i.e., $a_i>0$), the configuration between lower branches
  ($\#C_0$) and upper branches ($\#C_+$), the number of iterations, the overall number of \text{Ipopt} iterations, and the computation time in seconds.}
  \label{tab:num_ex:problemssol}
  \begin{adjustbox}{max width=\textwidth}
    \centering\begin{tabular}{l S[table-format=2.4] S[table-format=2] S[table-format=2.4] S[table-format=2.4]  r S[table-format=2] S[table-format=4] S[table-format=5.2]}
      \toprule
      \textbf{Problem} & \textbf{$V^*$} & \textbf{\#bars} & $\sigma_{\max}$ & $\hat{\sigma}_{\max}$ &  $\#C_0 | \#C_+$ & \textbf{\#it} & \textbf{\texttt{ipopt}\#it} & {\textbf{time} [\si{\second}]}\\
      \midrule
      \instName{TenBar}&  8.0000 &  5  & 1.0     & 1.0     & $0|10$\,~~~~&  7 &  181 &    0.40 \\
      \instName{Cant1} & 23.1399 & 37  & 2.78132 & 2.78132 & $0|224$~~  & 13 & 1287 &  395.08 \\
      \instName{Cant2a}& 23.9741 & 32  & 3.08450 & 2.20000 & $18|206$~~ & 14 & 1013 &  318.86 \\
      \instName{Cant2b}& 30.6300 & 30  & 7.39528 & 1.81591 & $2|222$~~ & 18 & 3228 &  949.15 \\
      \instName{Hook1} &  9.1118 & 21  & 7.21085 & 3.31283 & $0|661$~~  & 14 & 2968 & 11237.90 \\
      \instName{Hook2} &  9.1118 & 21  & 3.49038 & 3.31282 & $0|661$~~  & 18 & 1767 & 6928.56 \\
      \instName{Hook3} & 10.0619 & 21  & 5.19010 & 3.00000 & $8|653$~~ & 12 &  787 & 3288.61 \\
      \bottomrule
    \end{tabular}
  \end{adjustbox}
\end{table}

\begin{figure}[ht]
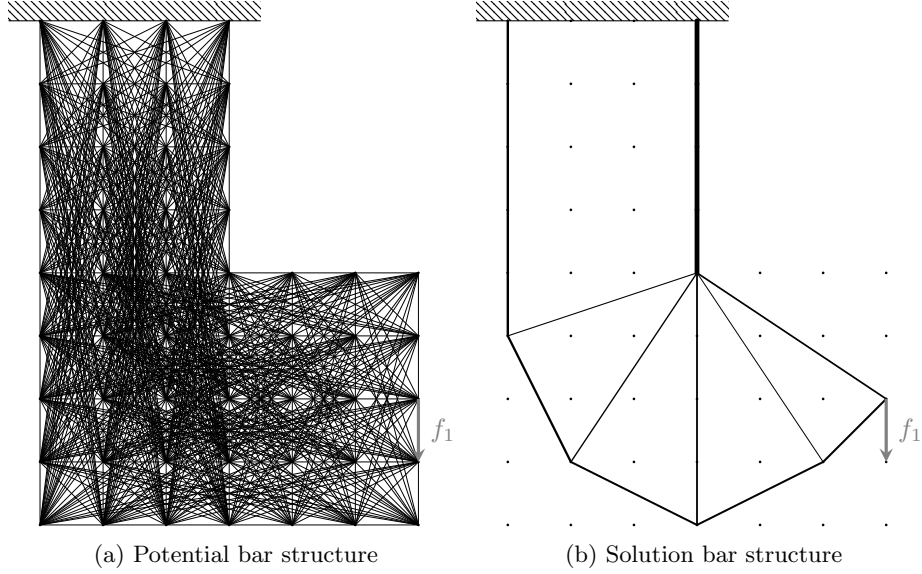

    \begin{subfigure}[t]{0.5\textwidth}
        \includegraphics[width=\textwidth]{hook_potential}
        \caption{Potential bar structure}
        \label{fig:num_ex:hook_potential}
      \end{subfigure}~
    \begin{subfigure}[t]{0.5\textwidth}
        \includegraphics[width=\textwidth]{hook_1}
        \caption{Solution bar structure}
        \label{fig:num_ex:hook_solution_1}
      \end{subfigure}
    \caption{Potential and solution bar structure of \instName{Hook1}, \instName{Hook2}, and \instName{Hook3}}
    \label{fig:num_ex:hook}
\end{figure}

\subsubsection{Comparison to Relaxation Approach}
We compare the results with the calculations of the relaxation approach from~\cite{diss_hoheisel}. 
For truss topology design problems, this is: \begin{equation*}
  \begin{aligned}
      \min_{a \in \Real^{N}, u \in \Real^{Ld}} \quad & \sum_{i = 1}^{N} \ell_{i} a _i \\
    \st \quad & K(a) u_{l} = f_{l} & \quad \text{for all } l = 1, \ldots , L \\
                                                     & f_{l}^{T} u_{l} \leq c & \quad \text{for all } l = 1, \ldots , L \\
                                                     & 0 \leq a_{i} \leq \overline{a} & \quad \text{for all } i = 1, \ldots , N \\
                                                     & r^t_{il}(a,u) \geq -t & \quad \text{for all } i = 1, \ldots , N, l = 1, \ldots , L. 
  \end{aligned}
\end{equation*} Where $t$ is the \textit{relaxation parameter} and \begin{equation*}
  \begin{aligned}
    r^t_{il}(a,u) &\define \varphi^t(a_i,\overline{\sigma}^{2} - \sigma_{i l}(a, u)^{2} ) \quad \text{for all } i = 1, \ldots , N, l = 1, \ldots , L,~ \text{with}\\
    \varphi^t(w,v) &\define \frac{1}{2}\left( wv + \sqrt{w^2v^2+t^2} + \sqrt{v^2+t^2} -v\right).
  \end{aligned}
\end{equation*}
The results are presented in Table~\ref{tab:num_ex:relaxsol}. The computation times whenever a solution is obtained was around ten times faster.
Consistent with the observations in~\cite{diss_hoheisel}, we achieve convergence for the instances \instName{TenBar} and \instName{Cant1} 
using the relaxation parameter of $t = 10^{-2}$. Note the importance of selecting an appropriate value of $t$, otherwise \texttt{Ipopt} 
aborts with maximal iterations reached or restoration failed messages (marked with DNF in Table~\ref{tab:num_ex:relaxsol}). 
For instance, in \instName{hook1} it can be observed that convergence is not achieved, even when different relaxation parameters are applied.
Figure~\ref{fig:num_ex:relax} shows representative outputs from the relaxation approach.

Figure~\ref{fig:num_ex:cantilever_relax} displays the optimal solution of \instName{Cant2} for a relaxation parameter of $t = 10^{-3}$,
which is structurally consistent with our flow approach solution for \instName{Cant1} and the results from~\cite{diss_hoheisel}.
The volume is slightly smaller with a value of $22.8013$ compared to all cantilever instances computed with our approach. 
The overall iterations of \texttt{Ipopt} are less in the relaxation approach.
Figures~\ref{fig:num_ex:hook_relax_1} and \ref{fig:num_ex:hook_relax_2} depict the outputs for \instName{Hook1} obtained with relaxation parameters
$t = 10^{-2}$ and $t = 10^{-5}$, respectively. The first output is a feasible structure similar to our gradient-flow solution 
of the hook instances, but with a larger volume of $9.19915$ and $18$ bars. 
The second output is an infeasible point and shows how different the last iteration of the solver can look like.

Unfortunately, we were unable to reproduce the same local minima found in~\cite{diss_hoheisel}.
Due to the non-convexity of vanishing constraint problems, it is not surprising that the results are not always fully correspond with previous calculations. 
In contrast, our gradient-flow approach successfully solves all listed instances. However, if only an approximation 
is required and a suitable relaxation parameter exists, the relaxation approach can save 
a considerable amount of computation time, but having the risk only to converge to feasible points.

\begin{table}[hb]
  \caption{Detailed solutions of the relaxation approach with varying relaxation parameter $t$. We denote the iteration number of \texttt{Ipopt}~\cite{ipopt}, if it the solver printed the message 'Optimal solution found'. 
  Otherwise we write \text{DNF} as short for did not finish.}
  \label{tab:num_ex:relaxsol}
  \begin{adjustbox}{max width=\textwidth}
    \centering\begin{tabular}{l S[table-format=4] S[table-format=4] S[table-format=4] S[table-format=4] S[table-format=4] S[table-format=4] S[table-format=4] S[table-format=4] }
      \toprule
      \textbf{Value of $t$} & \textbf{$10^{-2}$} & \textbf{$10^{-3}$} & \textbf{$10^{-4}$} & \textbf{$10^{-5}$} & \textbf{$10^{-6}$} & \textbf{$10^{-7}$} & \textbf{$10^{-8}$} & \textbf{$10^{-9}$} \\
      \midrule
      \instName{TenBar}& 34 & 156 & \text{DNF} & \text{DNF} & 46 & 66 & 53 & \text{DNF} \\
      \instName{Cant1} & 690 & 982 & \text{DNF} & \text{DNF} & \text{DNF} & \text{DNF} & \text{DNF} & \text{DNF} \\
      \instName{Cant2} & \text{DNF} & 571 & 541 & \text{DNF} & \text{DNF} & \text{DNF} & \text{DNF} & \text{DNF} \\
      \instName{Hook1} & \text{DNF} & \text{DNF} & \text{DNF} & \text{DNF} & \text{DNF} & \text{DNF} & \text{DNF} & \text{DNF} \\
      \bottomrule
    \end{tabular}
  \end{adjustbox}
\end{table}

\begin{figure}[ht]
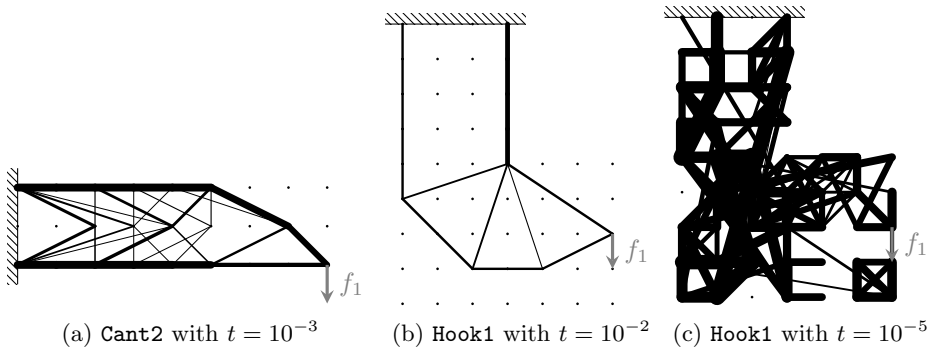

    \begin{subfigure}[t]{0.4\textwidth}
        \includegraphics[width=\textwidth]{cantilever_relax}
        \caption{\instName{Cant2} with $t=10^{-3}$}
        \label{fig:num_ex:cantilever_relax}
      \end{subfigure}~
    \begin{subfigure}[t]{0.3\textwidth}
        \includegraphics[width=\textwidth]{hook_relax_1}
        \caption{\instName{Hook1} with $t=10^{-2}$}
        \label{fig:num_ex:hook_relax_1}
      \end{subfigure}~\begin{subfigure}[t]{0.3\textwidth}
        \includegraphics[width=\textwidth]{hook_relax_2}
        \caption{\instName{Hook1} with $t=10^{-5}$}
        \label{fig:num_ex:hook_relax_2}
      \end{subfigure}~
    \caption{Different bar structures of \instName{Cant2} and \instName{Hook1} with different relaxation parameters. 
    In (a) we show an optimal solution. In (b) and (c) we see the output of the not finished optimizations of \instName{Hook1}.}
    \label{fig:num_ex:relax}
\end{figure}

\FloatBarrier
\section{Conclusion and Future Work}
This work offers a novel approach for computing strong stationary points in Mathematical Programs with Vanishing Constraints (MPVCs). 
The proposed method exploits the problem structure via a slack-variable formulation of vanishing and controlling constraints, 
systematically selecting and switching between convex subsets of the slack-variable feasible set.
This approach allows to keep track of the primal-dual gradient/anti-gradient flow at each iteration through the solution of a convex subproblem. 
Since these subproblems can be solved using standard optimization solvers, our approach improves practical applicability. 
Numerical experiments on truss topology design problems with a single load case demonstrate the effectiveness 
of the proposed gradient flow approach, which outperforms a compared standard relaxation method.

\section*{Acknowledgements}

Supported by the German Federal Ministry of Research, Technology and Space (BMFTR) under Grant No. 05M22VHA.

\printbibliography
